\newcommand\cyr
\renewcommand\rmdefault{wncyr}
\renewcommand\sfdefault{wncyss}
\renewcommand\encodingdefault{OT2}
\DeclareTextFontCommand{\textcyr}{\cyr}
\newtheorem{theorem}{Theorem}%[section]
\newtheorem{definition}[theorem]{Definition}
\chardef\bslash=`\\ % p. 424, TeXbook
\newcommand{\bbR}{{\mathbb{R}}}
\newcommand{\bbC}{{\mathbb{C}}}
\newcommand{\ran}{\text{\rm{Ran}}}
\newcommand{\tr}{\text{\rm{tr}}}
\newcommand{\calA}{{\mathcal A}}
\newcommand{\calD}{{\mathcal D}}
\newcommand{\calH}{{\mathcal H}}
\newcommand{\calN}{{\mathcal N}}
\newcommand{\calS}{{\mathcal S}}
\def\sM{{\mathfrak M}}
      \def\dC{{\mathbb C}}
   \def\cN{{\mathcal N}}   
\def\cS{{\mathcal S}}
\def\RE{{\rm Re\,}}
\DeclareMathOperator{\IM}{Im}
\newcommand{\eval}[2][\right]{\relax
  \ifx#1\right\relax \left.\fi#2#1\rvert}
\begin{document}

\title{The c-Entropy of non-dissipative L-systems}

%    Information for first author
\author{S. Belyi}
\address{Department of Mathematics\\ Troy University\\
Troy, AL 36082, USA\\
%URL: {\sf http://spectrum.troy.edu/$\sim$belyi/}
}
\curraddr{}
\email{sbelyi@troy.edu}
%\thanks{Thank you.}

%    Information for second author

\author[Makarov]{K. A. Makarov}
\address{Department of Mathematics\\
 University of Missouri\\
  Columbia, MO 63211, USA}
\email{makarovk@missouri.edu}

%\thanks{The second author was partially supported by the Simons collaboration grant  00061759  while preparing this paper.}

%    Information for third author
\author{E. Tsekanovskii}
\address{Department of Mathematics, Niagara University,  Lewiston,
NY  14109, USA} \email{\tt tsekanov@niagara.edu}

%\thanks{Thank you.}

%    General info
\subjclass{Primary 47A10; Secondary 47N50, 81Q10}
\date{DD/MM/2004}

%\dedicatory{In respectful memory of Harry Dym}

\keywords{L-system, transfer function, impedance function,  Herglotz-Nevan\-linna function,  c-entropy, dissipation coefficient, accumulation coefficient}

\begin{abstract}
In this paper, we extend the definition of c-entropy to canonical L-systems with non-dissipative state-space operators. We also introduce the concepts of  dissipation and accumulation coefficients for such systems. In addition,  we examine the coupling of these L-systems and derive closed form expressions  for the corresponding c-entropy.
 \end{abstract}

\maketitle

\tableofcontents

%%%%%%%%%%%%%%%%%

%%%%%%%%%%%%%%%%%%%
\section{Introduction}\label{s1}

In this paper we continue our  study of L-systems' c-entropy (see \cite{BT-21, BT-22, BT-23, BT-16}) and its properties.

Recall the concept of a canonical L-system.

Let $A$ be a bounded linear operator in a Hilbert space $\calH$ and $E$ is another Hilbert space with $\dim E<\infty$.
By  a  \textit{canonical L-system} we mean the array
\begin{equation}
\label{col0}
 \Theta =
\left(%
\begin{array}{ccc}
  A    & K & J \\
   \calH&  & E \\
\end{array}%
\right),
\end{equation}
where $K\in[E,\calH]$,  $J$ is a bounded, self-adjoint, and unitary operator in $E$, and   $\IM A=KJK^*$.
%The operator $T$ above is the \textit{main operator} of the L-system $\Theta$ that defines $\Theta$ uniquely together with the \textit{quasi-kernel} $\hat A$ that is the self-adjoint extension of $\dA$ such that $\RE\bA\supset\hat A=\hat A^*\supset\dA$.
The operator-valued function
\begin{equation*}\label{W1}
 W_\Theta(z)=I-2iK^*(A-zI)^{-1}KJ,\quad z\in \rho(A),
\end{equation*}
 is called the \textit{transfer function}  of an L-system $\Theta$ and
\begin{equation*}\label{real2}
 V_\Theta(z)=i[W_\Theta(z)+I]^{-1}[W_\Theta(z)-I]J =K^*(\RE A-zI)^{-1}K,\quad z\in\rho(A)\cap\dC_{\pm},
\end{equation*}
is called the \textit{impedance function } of $\Theta$. The formal definition of  L-systems is presented in Section \ref{s2}.

The main goal of this note is to  extend  the concept of  c-Entropy  to canonical non-dissipative L-systems with two-dimensional input-output space $E$. We are going to utilize simple yet descriptive model L-systems based on the multiplication operator covered in \cite{BT-23}. In addition to that we will generalize the notion of the dissipation coefficient  introduced in \cite{BT-16} and \cite{BT-21} (see also \cite{BT-23}).

The paper is organized as follows.

Section \ref{s2} contains necessary information on the L-systems theory.

In Section \ref{s3} we present  a detailed construction of a canonical dissipative L-system associated with the multiplication by a scalar operator. The input-output space for such an L-system is $\dC^2$ and both transfer and impedance functions  are $(2\times 2)$ matrix-valued functions.

Sections \ref{s4}--\ref{s7} of the paper contain the  main results.

Section \ref{s4} provides   a construction of  canonical L-systems that are similar to those in Section \ref{s3}. These  systems
are no longer dissipative and have the directing operator $J$ different from the identity matrix. We also provide  explicit formulas for the transfer and impedance matrix-functions.

In Section \ref{s5} we extend the definition  of c-entropy to a wider class of canonical L-systems compared those discussed in \cite{BT-21, BT-22, BT-23, BT-16}. We apply this definition to L-systems constructed in Sections \ref{s3} and \ref{s4} and obtain explicit representations for  c-entropy in each case. In addition to that, we determine the model parameter(s)  that  yield(s) an extreme c-entropy value.

In Section \ref{s6}, we extend the definition of the dissipation coefficient initially proposed in \cite{BT-16, BT-21,BT-23} to non-dissipative systems. This generalization introduces a novel concept of accumulation coefficient previously unexplored in this context. Additionally, we derive explicit analytical expressions for both dissipation and accumulation coefficients within the model L-systems framework developed in Sections \ref{s3} and \ref{s4}.

In Section \ref{s7}, we revisit the concept of L-system coupling  (see \cite{ABT,Bro}), and\ derive the explicit form of the coupling between two canonical L-systems discussed   Sections \ref{s3} and \ref{s4} followed by an independent proof of  the Multiplication Theorem  (see, e.g., \cite{Bro,MT10}): the transfer function of the  coupling coincides with  the product of the transfer functions associated with the factor L-systems.

The paper is concluded with illustrative examples that demonstrate the constructions and concepts discussed.

\section{Preliminaries}\label{s2}

For a pair of Hilbert spaces $\calH_1$, $\calH_2$ denote by $[\calH_1,\calH_2]$ the set of all bounded linear operators from $\calH_1$ to $\calH_2$.

Let $T$ be a bounded linear operator in a Hilbert space $\calH$, $K\in[E,\calH]$, and $J$ be a bounded, self-adjoint, and unitary operator in $E$, where $E$ is another Hilbert space with $\dim E<\infty$. Let also
\begin{equation}\label{e5-3}
\IM T=KJK^*.
\end{equation}

By definition, the  \textbf{Liv\v{s}ic canonical
system } or simply \textbf{canonical L-system} is just the  array
\begin{equation}
\label{BL}
 \Theta =
\left(%
\begin{array}{ccc}
  T & K & J \\
  \calH &  & E \\
\end{array}%
\right).
 \end{equation}
 The spaces $\calH$ and $E$ here  are
called \textit{state} and
\textit{input-output} spaces, and the operators
$T$, $K$, $J$ will be refered to as \textit{main},
\textit{channel}, and \textit{directing} operators, respectively.

Notice that
relation
\eqref{e5-3}
implies
\begin{equation}\label{e5-4}
    \ran (\IM T)\subseteq\ran(K).
\end{equation}

We  associate with an L-system $\Theta$ two  analytic functions,  the \textbf{transfer  function} of the L-system $\Theta$
\begin{equation}\label{e6-3-3}
W_\Theta (z)=I-2iK^\ast (T-zI)^{-1}KJ,\quad z\in \rho (T),
\end{equation}
and also the \textbf{impedance function}  given by the formula
\begin{equation}\label{e6-3-5}
V_\Theta (z) = K^\ast (\RE T - zI)^{-1} K, \quad z\in  \rho (\RE T).
\end{equation}

 The transfer function $W_\Theta (z)$ of the L-system $\Theta $ and function $V_\Theta (z)$ of the form (\ref{e6-3-5}) are connected by the following relations valid for $\IM z\ne0$, $z\in\rho(T)$,
\begin{equation}\label{e6-3-6}
\begin{aligned}
V_\Theta (z) &= i [W_\Theta (z) + I]^{-1} [W_\Theta (z) - I]J,\\
W_\Theta(z)&=(I+iV_\Theta(z)J)^{-1}(I-iV_\Theta(z)J).
\end{aligned}
\end{equation}

Recall that  the  impedance function $V_\Theta(z)$ of a canonical L-system admits the  integral representation (see, e.g.,  \cite[Section 5.5]{ABT}, \cite{Bro})
\begin{equation}\label{hernev-real}
V_\Theta(z)=\int_\bbR \frac{d\sigma(t)}{{t}-z},
\end{equation}
where  $\sigma$ is an   operator-valued bounded Borel measure in $E$ with the compact support on $\bbR$.

As far as the inverse problem is concerned,  we refer to  \cite{ABT,BMkT,GT} and references therein for the description
of the class of all Herglotz-Nevanlinna functions that admit  realizations as impedance  functions of an L-system.

\section{An L-system with a dissipative operator}\label{s3}

Let $\calH^2$ be a two-dimensional Hilbert space with an inner product $(\cdot,\cdot)$ and an orthogonal normalized basis of vectors $h_{01},h_{02}\in \calH^2$, ($\|h_{01}\|=\|h_{02}\|=1$). For a fixed number $\lambda_0\in\dC$ such that $\IM\lambda_0>0$ we introduce (see also \cite{Bro}) a linear operator
\begin{equation}\label{e4-22-d}
    T_d h=\left[
          \begin{array}{cc}
            \lambda_0 & 0 \\
            0 & -\bar\lambda_0 \\
          \end{array}
        \right]
     h,\quad h=\left[
                               \begin{array}{c}
                                 h_1 \\
                                 h_2 \\
                               \end{array}
                             \right]   \in\calH^2.
\end{equation}
Clearly,
$$
T^*_d=\left[
          \begin{array}{cc}
            \bar\lambda_0 & 0 \\
            0 & -\lambda_0 \\
          \end{array}
        \right],\;\IM T_d =\left[
          \begin{array}{cc}
            \IM\lambda_0 & 0 \\
            0 & \IM\lambda_0 \\
          \end{array}
        \right],\; \RE T_d =\left[
          \begin{array}{cc}
            \RE\lambda_0 & 0 \\
            0 & -\RE\lambda_0 \\
          \end{array}
        \right].
$$
 We are going to include $T_d$ into an  L-system $\Theta$. Let $K:\dC^2\rightarrow\calH^2$ be such that
\begin{equation}\label{e4-23-d}
   K \left[
                               \begin{array}{c}
                                 c_1 \\
                                 c_2 \\
                               \end{array}
                             \right]=(\sqrt{\IM\lambda_0})\left[
          \begin{array}{cc}
            h_{01} & 0 \\
            0 & h_{02} \\
          \end{array}
        \right]\left[
                               \begin{array}{c}
                                 c_1 \\
                                 c_2 \\
                               \end{array}
                             \right]=(\sqrt{\IM\lambda_0})\left[
                               \begin{array}{c}
                               c_1   h_{01} \\
                                 c_2  h_{02}\\
                               \end{array}
                             \right]\in\calH^2,
\end{equation}
for all $c_1,\,c_2\in\dC$. Then, the adjoint operator $K^*:\calH^2\rightarrow\dC^2$ acts on an arbitrary vector
$h=\left[
                               \begin{array}{c}
                                 h_1 \\
                                 h_2 \\
                               \end{array}
                             \right] =\left[
                               \begin{array}{c}
                               c_1   h_{01} \\
                                 c_2  h_{02}\\
                               \end{array}
                             \right]\in\calH^2$ as follows
\begin{equation}\label{e-4-32}
   K^* h=K^*\left[
                               \begin{array}{c}
                               c_1   h_{01} \\
                                 c_2  h_{02}\\
                               \end{array}
                             \right]=\sqrt{\IM\lambda_0}\left[
                               \begin{array}{c}
                               (h,c_1   h_{01}) \\
                                (h, c_2  h_{02})\\
                               \end{array}
                             \right]=\sqrt{\IM\lambda_0}\left[
                               \begin{array}{c}
                                 c_1 \\
                                 c_2 \\
                               \end{array}
                             \right].
\end{equation}
%Note that for an arbitrary element $h=C_h h_0\in\calH$, ($C_h\in\dC$) formulas \eqref{e-4-31} and \eqref{e-4-32} imply
%$$
% K^* h= K^* (C_h h_0)=C_h\sqrt{\IM\lambda_0}\quad \mathrm{and} \quad K\, C_h=(\sqrt{\IM\lambda_0})h.
%$$
Let
\begin{equation}\label{e25-I}
    I=\left[
          \begin{array}{cc}
            1 & 0 \\
            0 & 1 \\
          \end{array}
        \right]
\end{equation}
be the identity operator.
Furthermore,
\begin{equation}\label{e4-26-d}
\begin{aligned}
KIK^* h&=\sqrt{\IM\lambda_0}\left[
          \begin{array}{cc}
            h_{01} & 0 \\
            0 & h_{02} \\
          \end{array}
        \right]\left[
          \begin{array}{cc}
            1 & 0 \\
            0 & 1 \\
          \end{array}
        \right]\sqrt{\IM\lambda_0}\left[
                               \begin{array}{c}
                                 c_1 \\
                                 c_2 \\
                               \end{array}
                             \right]\\
                             &=\left[
          \begin{array}{cc}
            \IM\lambda_0 & 0 \\
            0 & \IM\lambda_0 \\
          \end{array}
        \right]\left[
                               \begin{array}{c}
                                 h_1 \\
                                 h_2 \\
                               \end{array}
                             \right] =\IM T_d h.
\end{aligned}
\end{equation}
Thus, we can construct an L-system of the form
\begin{equation}\label{e4-27-d}
    \Theta_d= \begin{pmatrix} T_d&K&\ I\cr  \calH^2 & &\dC^2\cr \end{pmatrix},
\end{equation}
where operators $T_d$ and $K$  are defined by \eqref{e4-22-d} and  \eqref{e4-23-d}, respectively. Taking into account that
$$
(T_d-zI)^{-1}=\left[
          \begin{array}{cc}
            \frac{1}{\lambda_0-zI} & 0 \\
            0 & -\frac{1}{\overline{\lambda_0}+zI} \\
          \end{array}
        \right],
$$
we proceed with calculations of the transfer function $W_\Theta(z)$.

{We have
\begin{equation}\label{e4-28-d}
    \begin{aligned}
    W_{\Theta_d} (z)&=I-2iK^\ast (T_d-zI)^{-1}KI
    =\left[
          \begin{array}{cc}
            1 & 0 \\
            0 & 1 \\
          \end{array}
        \right]-2i\IM \lambda_0\left[
          \begin{array}{cc}
            \frac{1}{\lambda_0-z} & 0 \\
            0 & -\frac{1}{\overline{\lambda_0}+z} \\
          \end{array}
        \right]\\
    &=\left[
          \begin{array}{cc}
            1 & 0 \\
            0 & 1 \\
          \end{array}
        \right]-\left[
          \begin{array}{cc}
            \frac{\lambda_0-\bar\lambda_0}{\lambda_0-z} & 0 \\
            0 & -\frac{\lambda_0-\bar\lambda_0}{\overline{\lambda_0}+z} \\
          \end{array}
        \right]=\left[
          \begin{array}{cc}
            \frac{\bar\lambda_0-z}{\lambda_0-z} & 0 \\
            0 & \frac{\lambda_0+z}{{\bar\lambda_0}+z} \\
          \end{array}
        \right].
    \end{aligned}
\end{equation}
}
The corresponding impedance function of the form \eqref{e6-3-5} is easily found and given by
\begin{equation}\label{e4-29-d}
    V_{\Theta_d} (z)=K^\ast (\RE T_d - zI)^{-1} K=\left[
          \begin{array}{cc}
            \frac{\IM\lambda_0}{\RE\lambda_0-z} & 0 \\
            0 & -\frac{\IM\lambda_0}{\RE\lambda_0+z} \\
          \end{array}
        \right].%=\frac{\IM\lambda_0}{\RE\lambda_0-z}I.
\end{equation}
By direct check one confirms that $V_\Theta (z)$ is a Herglotz-Nevanlinna matrix-valued function.

\section{L-systems with a non-dissipative operator}\label{s4}

Let $\calH^2$ be a two-dimensional Hilbert space with an inner product $(\cdot,\cdot)$ and an orthogonal normalized basis of vectors $h_{01},h_{02}\in \calH^2$, ($\|h_{01}\|=\|h_{02}\|=1$). For a fixed number $\lambda_0\in\dC$ such that $\IM\lambda_0>0$ we introduce (see also \cite{Bro}) a linear operator
\begin{equation}\label{e-d-4-30}
    T_m h=\left[
          \begin{array}{cc}
            \lambda_0 & 0 \\
            0 & \bar\lambda_0 \\
          \end{array}
        \right]
     h,\quad h=\left[
                               \begin{array}{c}
                                 h_1 \\
                                 h_2 \\
                               \end{array}
                             \right]   \in\calH^2.
\end{equation}
Clearly,
$$
T^*_m=\left[
          \begin{array}{cc}
            \bar\lambda_0 & 0 \\
            0 & \lambda_0 \\
          \end{array}
        \right],\;\IM T_m =\left[
          \begin{array}{cc}
            \IM\lambda_0 & 0 \\
            0 & -\IM\lambda_0 \\
          \end{array}
        \right],\; \RE T_m =\left[
          \begin{array}{cc}
            \RE\lambda_0 & 0 \\
            0 & \RE\lambda_0 \\
          \end{array}
        \right].
$$
 We are going to include $T_m$ into an  L-system $\Theta_m$. Let $K:\dC^2\rightarrow\calH^2$ be  of the form \eqref{e4-23-d},  that is,
\begin{equation*}\label{e-4-31-m}
   K \left[
                               \begin{array}{c}
                                 c_1 \\
                                 c_2 \\
                               \end{array}
                             \right]=(\sqrt{\IM\lambda_0})\left[
          \begin{array}{cc}
            h_{01} & 0 \\
            0 & h_{02} \\
          \end{array}
        \right]\left[
                               \begin{array}{c}
                                 c_1 \\
                                 c_2 \\
                               \end{array}
                             \right]=(\sqrt{\IM\lambda_0})\left[
                               \begin{array}{c}
                               c_1   h_{01} \\
                                 c_2  h_{02}\\
                               \end{array}
                             \right]\in\calH^2,
\end{equation*}
for all $c_1,\,c_2\in\dC$. Then, the adjoint operator $K^*:\calH^2\rightarrow\dC^2$ is of the form \eqref{e-4-32} and acts on an arbitrary vector
$h=\left[
                               \begin{array}{c}
                                 h_1 \\
                                 h_2 \\
                               \end{array}
                             \right] =\left[
                               \begin{array}{c}
                               c_1   h_{01} \\
                                 c_2  h_{02}\\
                               \end{array}
                             \right]\in\calH^2$ as follows
\begin{equation*}\label{e-4-19}
   K^* h=K^*\left[
                               \begin{array}{c}
                               c_1   h_{01} \\
                                 c_2  h_{02}\\
                               \end{array}
                             \right]=\sqrt{\IM\lambda_0}\left[
                               \begin{array}{c}
                               (h,c_1   h_{01}) \\
                                (h, c_2  h_{02})\\
                               \end{array}
                             \right]=\sqrt{\IM\lambda_0}\left[
                               \begin{array}{c}
                                 c_1 \\
                                 c_2 \\
                               \end{array}
                             \right].
\end{equation*}

Define a signature operator
\begin{equation}\label{e-25-J}
    J_m=\left[
          \begin{array}{cc}
            1 & 0 \\
            0 & -1 \\
          \end{array}
        \right]
\end{equation}
and note that $J_m=J^*_m=J^{-1}_m$.
Furthermore,
\begin{equation}\label{e-4-33}
\begin{aligned}
KJ_m K^* h&=\sqrt{\IM\lambda_0}\left[
          \begin{array}{cc}
            h_{01} & 0 \\
            0 & h_{02} \\
          \end{array}
        \right]\left[
          \begin{array}{cc}
            1 & 0 \\
            0 & -1 \\
          \end{array}
        \right]\sqrt{\IM\lambda_0}\left[
                               \begin{array}{c}
                                 c_1 \\
                                 c_2 \\
                               \end{array}
                             \right]\\
                             &=\left[
          \begin{array}{cc}
            \IM\lambda_0 & 0 \\
            0 & -\IM\lambda_0 \\
          \end{array}
        \right]\left[
                               \begin{array}{c}
                                 h_1 \\
                                 h_2 \\
                               \end{array}
                             \right] =\IM T_m h.
\end{aligned}
\end{equation}
Thus, we can construct an L-system of the form
\begin{equation}\label{e-4-34}
    \Theta_m= \begin{pmatrix} T_m&K&\ J_m\cr  \calH^2 & &\dC^2\cr \end{pmatrix},
\end{equation}
where operators $T_m$, $K$,  and $J_m$ are defined by \eqref{e-d-4-30},  \eqref{e4-23-d}, and \eqref{e-25-J}, respectively. Taking into account that
$$
(T_m-zI)^{-1}=\left[
          \begin{array}{cc}
            \frac{1}{\lambda_0-zI} & 0 \\
            0 & \frac{1}{\overline{\lambda_0}-zI} \\
          \end{array}
        \right],
$$
we proceed with calculations of the transfer function $W_{\Theta_m}(z)$. We have
\begin{equation}\label{e-4-35}
    \begin{aligned}
    W_{\Theta_m} (z)&=I-2iK^\ast (T_m-zI)^{-1}KJ_m=\left[
          \begin{array}{cc}
            1 & 0 \\
            0 & 1 \\
          \end{array}
        \right]-2i\IM \lambda_0\left[
          \begin{array}{cc}
            \frac{1}{\lambda_0-z} & 0 \\
            0 & -\frac{1}{\overline{\lambda_0}-z} \\
          \end{array}
        \right]\\
    &=\left[
          \begin{array}{cc}
            1 & 0 \\
            0 & 1 \\
          \end{array}
        \right]-\left[
          \begin{array}{cc}
            \frac{\lambda_0-\bar\lambda_0}{\lambda_0-z} & 0 \\
            0 & -\frac{\lambda_0-\bar\lambda_0}{\overline{\lambda_0}-z} \\
          \end{array}
        \right]=\left[
          \begin{array}{cc}
            \frac{\bar\lambda_0-z}{\lambda_0-z} & 0 \\
            0 & \frac{\lambda_0-z}{\overline{\lambda_0}-z} \\
          \end{array}
        \right].
    \end{aligned}
\end{equation}
The corresponding impedance function of the form \eqref{e6-3-5} is easily found and given by
\begin{equation}\label{e-4-36}
    V_{\Theta_m} (z)=K^\ast (\RE T_m - zI)^{-1} K=\left[
          \begin{array}{cc}
            \frac{\IM\lambda_0}{\RE\lambda_0-z} & 0 \\
            0 & \frac{\IM\lambda_0}{\RE\lambda_0-z} \\
          \end{array}
        \right]=\frac{\IM\lambda_0}{\RE\lambda_0-z}I.
\end{equation}
By direct check one confirms that $V_{\Theta_m} (z)$ is a Herglotz-Nevanlinna matrix-valued function.

Now we are going to construct yet another L-system whose main operator is not dissipative. Using the same conventions as in the beginning of the section we set
\begin{equation}\label{e-d-4-33}
    T_a h=\left[
          \begin{array}{cc}
            \bar\lambda_0 & 0 \\
            0 & -\lambda_0 \\
          \end{array}
        \right]
     h,\quad h=\left[
                               \begin{array}{c}
                                 h_1 \\
                                 h_2 \\
                               \end{array}
                             \right]   \in\calH^2.
\end{equation}
Clearly,
$$
T^*_a=\left[
          \begin{array}{cc}
            \lambda_0 & 0 \\
            0 & -\bar\lambda_0 \\
          \end{array}
        \right],\;
        \IM T_a =\left[
          \begin{array}{cc}
            -\IM\lambda_0 & 0 \\
            0 & -\IM\lambda_0 \\
          \end{array}
        \right],\; \RE T_a =\left[
          \begin{array}{cc}
            \RE\lambda_0 & 0 \\
            0 & -\RE\lambda_0 \\
          \end{array}
        \right].
$$
 We are going to include $T_a$ into an  L-system $\Theta$. Let $K:\dC^2\rightarrow\calH^2$ be still defined via \eqref{e4-23-d} with $K^*:\calH^2\rightarrow\dC^2$ given by \eqref{e-4-32}.

Define a signature operator
\begin{equation}\label{e-34-J}
    J_a=-I=\left[
          \begin{array}{cc}
            -1 & 0 \\
            0 & -1 \\
          \end{array}
        \right]
\end{equation}
and note that $J_a=J^*_a=J^{-1}_a$.
Furthermore,
\begin{equation}\label{e-4-35-KJK}
\begin{aligned}
KJ_a K^* h&=\sqrt{\IM\lambda_0}\left[
          \begin{array}{cc}
            h_{01} & 0 \\
            0 & h_{02} \\
          \end{array}
        \right]\left[
          \begin{array}{cc}
            -1 & 0 \\
            0 & -1 \\
          \end{array}
        \right]\sqrt{\IM\lambda_0}\left[
                               \begin{array}{c}
                                 c_1 \\
                                 c_2 \\
                               \end{array}
                             \right]\\
                             &=\left[
          \begin{array}{cc}
            -\IM\lambda_0 & 0 \\
            0 & -\IM\lambda_0 \\
          \end{array}
        \right]\left[
                               \begin{array}{c}
                                 h_1 \\
                                 h_2 \\
                               \end{array}
                             \right] =\IM T_a h.
\end{aligned}
\end{equation}
Thus, we can construct an L-system of the form
\begin{equation}\label{e-4-36-a}
    \Theta_a= \begin{pmatrix} T_a&K&\ J_a\cr  \calH^2 & &\dC^2\cr \end{pmatrix},
\end{equation}
where operators $T_a$, $K$,  and $J_a$ are defined by \eqref{e-d-4-33},  \eqref{e4-23-d}, and \eqref{e-34-J}, respectively. Taking into account that
$$
(T_a-zI)^{-1}=\left[
          \begin{array}{cc}
            \frac{1}{\bar\lambda_0-zI} & 0 \\
            0 & -\frac{1}{\lambda_0+zI} \\
          \end{array}
        \right],
$$
we proceed with calculations of the transfer function $W_{\Theta_a}(z)$. We have
\begin{equation}\label{e-4-27-a}
    \begin{aligned}
    W_{\Theta_a} (z)&=I-2iK^\ast (T_a-zI)^{-1}K J_a=\left[
          \begin{array}{cc}
            1 & 0 \\
            0 & 1 \\
          \end{array}
        \right]-2i\IM \lambda_0\left[
          \begin{array}{cc}
            -\frac{1}{\bar\lambda_0-z} & 0 \\
            0 & \frac{1}{{\lambda_0}+z} \\
          \end{array}
        \right]\\
    &=\left[
          \begin{array}{cc}
            1 & 0 \\
            0 & 1 \\
          \end{array}
        \right]-\left[
          \begin{array}{cc}
            -\frac{\lambda_0-\bar\lambda_0}{\bar\lambda_0-z} & 0 \\
            0 & \frac{\lambda_0-\bar\lambda_0}{\lambda_0+z} \\
          \end{array}
        \right]=\left[
          \begin{array}{cc}
            \frac{\lambda_0-z}{\bar\lambda_0-z} & 0 \\
            0 & \frac{\bar\lambda_0+z}{\lambda_0+z} \\
          \end{array}
        \right].
    \end{aligned}
\end{equation}
{For the corresponding impedance function we have }
\begin{equation}\label{e-4-36-aa}
    V_{\Theta_a} (z)=K^\ast (\RE T_a - zI)^{-1} K=\left[
          \begin{array}{cc}
            \frac{\IM\lambda_0}{\RE\lambda_0-z} & 0 \\
            0 & -\frac{\IM\lambda_0}{\RE\lambda_0+z} \\
          \end{array}
        \right].
\end{equation}

\section{c-Entropy of dissipative and non-dissipative L-systems}\label{s5}

We begin with reminding the  definition of   L-system  c-Entropy introduced in \cite{BT-16}.
\begin{definition}\label{d-9-ent}
Let $\Theta$ be an L-system of the form \eqref{BL}. The quantity
\begin{equation}\label{e-80-entropy-def}
    \calS=-\tr \ln (|W_\Theta(-i)|),%=-\ln(|\kappa|)=\ln (1/|\kappa|)
\end{equation}
where $W_\Theta(z)$ is the transfer function of $\Theta$, is called the \textbf{coupling entropy} (or \textbf{c-Entropy}) of the L-system $\Theta$.
\end{definition}

 Note that if, in addition,  the point $z=i$ belongs to $\rho(T)$, then we also have that
\begin{equation}\label{e-80-entropy}
     \calS=\tr \ln (|W_\Theta(i)|).%=\ln (1/|\kappa|)=-\ln(|\kappa|).
\end{equation}

Our next  goal is to  calculate  the c-Entropies for the dissipative and non-dissipative cases  described in Sections \ref{s3} and \ref{s4}, respectively.

Let us start with the dissipative case.
\begin{theorem}\label{t-2}%
Let $\Theta_d$ be an L-system   of the form \eqref{e4-27-d} that is based upon  operator $T_d$ in \eqref{e4-22-d}. Then  the c-Entropy $\calS_d$ of $\Theta_d$ is
\begin{equation}\label{e-40-entropy}
     \calS_d=\ln{\frac{|\lambda_0|^2+2\IM\lambda_0+1}{|\lambda_0|^2-2\IM\lambda_0+1}}.
\end{equation}
\end{theorem}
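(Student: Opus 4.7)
The plan is to substitute $z=-i$ into the explicit diagonal form \eqref{e4-28-d} of $W_{\Theta_d}(z)$, take the modulus entrywise (the matrix is diagonal, so $|W_{\Theta_d}(-i)|=\sqrt{W_{\Theta_d}^*(-i)W_{\Theta_d}(-i)}$ is again diagonal with entries equal to the absolute values of the scalar entries), then apply $\tr\ln$.

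First I would compute
\begin{equation*}
W_{\Theta_d}(-i)=\begin{pmatrix}\dfrac{\bar\lambda_0+i}{\lambda_0+i}&0\\[4pt]0&\dfrac{\lambda_0-i}{\bar\lambda_0-i}\end{pmatrix}.
\end{equation*}
Writing $\lambda_0=a+ib$ with $b=\IM\lambda_0>0$, the identity
\begin{equation*}
|\lambda_0\pm i|^{2}=a^{2}+(b\pm1)^{2}=|\lambda_0|^{2}\pm2\IM\lambda_0+1
\end{equation*}
(and the corresponding identity for $\bar\lambda_0\pm i$ with the opposite sign on the $2\IM\lambda_0$ term) gives
\begin{equation*}
\left|\frac{\bar\lambda_0+i}{\lambda_0+i}\right|^{2}=\left|\frac{\lambda_0-i}{\bar\lambda_0-i}\right|^{2}=\frac{|\lambda_0|^{2}-2\IM\lambda_0+1}{|\lambda_0|^{2}+2\IM\lambda_0+1}.
\end{equation*}

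Consequently $|W_{\Theta_d}(-i)|$ is a scalar multiple of the identity in $\dC^{2}$, namely
\begin{equation*}
|W_{\Theta_d}(-i)|=\sqrt{\frac{|\lambda_0|^{2}-2\IM\lambda_0+1}{|\lambda_0|^{2}+2\IM\lambda_0+1}}\,I.
\end{equation*}
Finally, taking logarithm and trace as in Definition \ref{d-9-ent} yields
\begin{equation*}
\calS_d=-\tr\ln|W_{\Theta_d}(-i)|=-2\cdot\tfrac12\ln\frac{|\lambda_0|^{2}-2\IM\lambda_0+1}{|\lambda_0|^{2}+2\IM\lambda_0+1}=\ln\frac{|\lambda_0|^{2}+2\IM\lambda_0+1}{|\lambda_0|^{2}-2\IM\lambda_0+1},
\end{equation*}
as claimed.

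This is essentially a direct calculation with no real obstacle; the only thing worth flagging is the pleasant coincidence that the two diagonal entries of $|W_{\Theta_d}(-i)|$ are equal, which is what produces the neat closed form and which follows from the symmetry between the blocks $\lambda_0$ and $-\bar\lambda_0$ in \eqref{e4-22-d}. Alternatively, one could equally well evaluate at $z=i$ via \eqref{e-80-entropy} and obtain the same answer, since both eigenvalues of $T_d$ lie off the real axis and $\pm i\in\rho(T_d)$.
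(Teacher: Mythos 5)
Your proof is correct and is essentially the same as the paper's: both evaluate the diagonal matrix $W_{\Theta_d}(-i)$ from \eqref{e4-28-d}, observe that the two diagonal entries have equal modulus $\bigl(\tfrac{|\lambda_0|^2-2\IM\lambda_0+1}{|\lambda_0|^2+2\IM\lambda_0+1}\bigr)^{1/2}$, and apply $-\tr\ln$. The only cosmetic difference is that you work with the squared moduli via $|\lambda_0\pm i|^2$ while the paper takes logarithms of the entries first; the computation is identical.
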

\begin{proof}
Our plan is to find $\calS_d$ by the definition using \eqref{e-80-entropy-def}. Keeping in mind \eqref{e-4-35} we observe
\begin{equation}\label{e-36-ln}
\begin{aligned}
\ln\left|\frac{\bar\lambda_0+i}{\lambda_0+i}\right|
&=\ln\left|\frac{\RE\lambda_0+i(1-\IM\lambda_0)}{\RE\lambda_0+i(1+\IM\lambda_0)}\right|
=\ln\sqrt{\frac{(\RE\lambda_0)^2+(1-\IM\lambda_0)^2}{(\RE\lambda_0)^2+(1+\IM\lambda_0)^2}}\\
&=\frac{1}{2}\ln{\frac{|\lambda_0|^2-2\IM\lambda_0+1}{|\lambda_0|^2+2\IM\lambda_0+1}}.
\end{aligned}
\end{equation}

Similarly,
\begin{equation}\label{e-37-ln}
    \begin{aligned}
\ln\left|\frac{\lambda_0-i}{\bar\lambda_0-i}\right|
&=\ln\left|\frac{\RE\lambda_0+i(\IM\lambda_0-1)}{\RE\lambda_0-i(\IM\lambda_0+1)}\right|
=\ln\sqrt{\frac{(\RE\lambda_0)^2+(\IM\lambda_0-1)^2}{(\RE\lambda_0)^2+(\IM\lambda_0+1)^2}}\\
&=\frac{1}{2}\ln{\frac{|\lambda_0|^2-2\IM\lambda_0+1}{|\lambda_0|^2+2\IM\lambda_0+1}}.
    \end{aligned}
\end{equation}

Consequently,
\begin{equation}\label{e-41}
    \begin{aligned}
\calS_d&=-\tr\ln (|W_{\Theta_d}(-i)|)=-\tr\ln \left[
          \begin{array}{cc}
            \left|\frac{\bar\lambda_0+i}{\lambda_0+i}\right| & 0 \\
            0 & \left|\frac{\lambda_0-i}{{\bar\lambda_0}-i}\right| \\
          \end{array}
        \right]\\
        &=-\tr\left[
          \begin{array}{cc}
            \ln\left| \frac{\bar\lambda_0+i}{\lambda_0+i}\right| & 0 \\
            0 & \ln \left|\frac{\lambda_0-i}{{\bar\lambda_0}-i}\right| \\
          \end{array}
        \right]
\\
&=-\tr\left[
          \begin{array}{cc}
            \frac{1}{2}\ln{\frac{|\lambda_0|^2-2\IM\lambda_0+1}{|\lambda_0|^2+2\IM\lambda_0+1}} & 0 \\
            0 & \frac{1}{2}\ln{\frac{|\lambda_0|^2-2\IM\lambda_0+1}{|\lambda_0|^2+2\IM\lambda_0+1}} \\
          \end{array}
        \right]\\
&=-\ln{\frac{|\lambda_0|^2-2\IM\lambda_0+1}{|\lambda_0|^2+2\IM\lambda_0+1}}
=\ln{\frac{|\lambda_0|^2+2\IM\lambda_0+1}{|\lambda_0|^2-2\IM\lambda_0+1}}.
    \end{aligned}
\end{equation}
Hence, we have shown \eqref{e-40-entropy}.
\end{proof}

Now we are going to evaluate c-Entropy in the first non-dissipative case.
\begin{theorem}\label{t-9}%
Let $\Theta_m$ be an L-system   of the form \eqref{e-4-34} that is based upon  operator $T_m$ in\eqref{e-d-4-30}. Then  the c-Entropy $\calS_m$ of $\Theta_m$ is zero.
\end{theorem}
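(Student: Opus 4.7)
The plan is to apply Definition \ref{d-9-ent} directly, using the explicit transfer function \eqref{e-4-35} computed in Section \ref{s4}. Since $W_{\Theta_m}(z)$ is diagonal, evaluating at $z=-i$ is a trivial matter of substitution, and the trace splits into a sum of two scalar logarithms.

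Concretely, I would first substitute $z=-i$ into \eqref{e-4-35} to obtain
\[
W_{\Theta_m}(-i)=\begin{pmatrix} \dfrac{\bar\lambda_0+i}{\lambda_0+i} & 0 \\[2pt] 0 & \dfrac{\lambda_0+i}{\bar\lambda_0+i}\end{pmatrix}.
\]
The key observation, and the whole reason the theorem holds, is that the two diagonal entries are complex reciprocals of one another: writing $w=\tfrac{\bar\lambda_0+i}{\lambda_0+i}$, the $(2,2)$-entry equals $1/w$, so $|w|\cdot|1/w|=1$ and $\ln|w|+\ln|1/w|=0$.

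Therefore
\[
\calS_m=-\tr\ln\bigl(|W_{\Theta_m}(-i)|\bigr)=-\Bigl(\ln|w|+\ln|1/w|\Bigr)=0,
\]
which establishes the claim. As a sanity check one could also compute $\ln|w|$ explicitly using the same algebra as in \eqref{e-36-ln} to see that the two entries are negatives of each other; this recovers the cancellation without invoking the reciprocal structure directly.

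There is no real obstacle here: the entire content of the theorem is the reciprocal symmetry of the two diagonal entries of $W_{\Theta_m}$, which in turn reflects the fact that the main operator $T_m$ has spectrum $\{\lambda_0,\bar\lambda_0\}$ symmetric about $\bbR$ while the signature operator $J_m$ flips the sign on the second channel. The only thing to be careful about is verifying that $W_{\Theta_m}(-i)$ is indeed defined, i.e.\ that $-i\in\rho(T_m)$; this is immediate since $\IM\lambda_0>0$ and so neither $\lambda_0$ nor $\bar\lambda_0$ equals $-i$.
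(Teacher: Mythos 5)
Your proposal is correct and follows essentially the same route as the paper: substitute $z=-i$ into \eqref{e-4-35}, note that the two diagonal entries of $W_{\Theta_m}(-i)$ are reciprocals of one another, and conclude that the trace of the logarithm of their moduli vanishes, exactly as in the paper's computation \eqref{e-41-m}. The added remark about $-i\in\rho(T_m)$ is a harmless extra check.
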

\begin{proof}
Once again we are going to find $\calS_m$ by the definition using \eqref{e-80-entropy-def}. Taking into account \eqref{e-4-35} and \eqref{e-80-entropy-def} we have
\begin{equation}\label{e-41-m}
    \begin{aligned}
\calS_m&=-\tr\ln (|W_{\Theta_m}(-i)|)=-\tr\ln \left[
          \begin{array}{cc}
            \left|\frac{\bar\lambda_0+i}{\lambda_0+i}\right| & 0 \\
            0 & \left|\frac{\lambda_0+i}{{\bar\lambda_0}+i}\right| \\
          \end{array}
        \right]\\
        &=-\tr\left[
          \begin{array}{cc}
            \ln\left| \frac{\bar\lambda_0+i}{\lambda_0+i}\right| & 0 \\
            0 & \ln \left|\frac{\lambda_0+i}{{\bar\lambda_0}+i}\right| \\
          \end{array}
        \right]=-\left(\ln\left| \frac{\bar\lambda_0+i}{\lambda_0+i}\right|+\ln \left|\frac{\lambda_0+i}{{\bar\lambda_0}+i}\right| \right)
\\
&=-\ln 1 =0.
    \end{aligned}
\end{equation}
Thus, $\calS_m=0$.
\end{proof}

At this point we are ready  to discuss the accumulative case.
\begin{theorem}\label{t-4}%
Let $\Theta_a$ be an L-system   of the form \eqref{e-4-36-a} that is based upon  operator $T_a$ in \eqref{e-d-4-33}. Then  the c-Entropy $\calS_a$ of $\Theta_a$ is
\begin{equation}\label{e-46-entropy}
     \calS_a=\ln{\frac{|\lambda_0|^2-2\IM\lambda_0+1}{|\lambda_0|^2+2\IM\lambda_0+1}}.
\end{equation}
\end{theorem}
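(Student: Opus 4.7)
The plan is to apply Definition~\ref{d-9-ent} directly to the transfer function $W_{\Theta_a}(z)$ computed in \eqref{e-4-27-a}, exactly as was done for the dissipative case in Theorem~\ref{t-2}. Evaluating \eqref{e-4-27-a} at $z=-i$ produces the diagonal matrix with entries $\tfrac{\lambda_0+i}{\bar\lambda_0+i}$ and $\tfrac{\bar\lambda_0-i}{\lambda_0-i}$, so the required quantity $-\tr\ln\bigl(\bigl|W_{\Theta_a}(-i)\bigr|\bigr)$ reduces to the sum of the logarithms of the moduli of these two scalars.

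The cleanest route exploits a duality between $\Theta_a$ and $\Theta_d$ at the level of transfer functions. Comparing \eqref{e-4-27-a} with \eqref{e4-28-d} one sees that each diagonal entry of $W_{\Theta_a}(z)$ is precisely the reciprocal of the corresponding diagonal entry of $W_{\Theta_d}(z)$; consequently $\bigl|W_{\Theta_a}(-i)\bigr| = \bigl|W_{\Theta_d}(-i)\bigr|^{-1}$ entrywise, and hence
\begin{equation*}
\tr\ln\bigl(\bigl|W_{\Theta_a}(-i)\bigr|\bigr) \;=\; -\tr\ln\bigl(\bigl|W_{\Theta_d}(-i)\bigr|\bigr).
\end{equation*}
Definition~\ref{d-9-ent} then yields $\calS_a=-\calS_d$, and combining this identity with the formula \eqref{e-40-entropy} from Theorem~\ref{t-2} delivers \eqref{e-46-entropy} at once.

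If one prefers to avoid invoking the reciprocal structure, the alternative is to imitate the proof of Theorem~\ref{t-2} literally: the computations \eqref{e-36-ln} and \eqref{e-37-ln} adapt verbatim with the numerator and denominator interchanged, so that each of the two diagonal logarithms equals $\tfrac{1}{2}\ln\tfrac{|\lambda_0|^2+2\IM\lambda_0+1}{|\lambda_0|^2-2\IM\lambda_0+1}$; summing and negating as in \eqref{e-41} produces \eqref{e-46-entropy}.

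There is no substantial obstacle here, only routine bookkeeping once the reciprocal relationship $W_{\Theta_a}(z)=W_{\Theta_d}(z)^{-1}$ has been observed. Structurally, the sign flip $\calS_a=-\calS_d$ reflects the fact that passing from the dissipative setting of Section~\ref{s3} to the accumulative setting is implemented by negating the directing operator ($I\mapsto J_a=-I$), which inverts the transfer function and therefore reverses the sign of the c-entropy.
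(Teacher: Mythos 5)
Your proposal is correct and follows essentially the same route as the paper: the paper's own proof also evaluates $W_{\Theta_a}(-i)$ from \eqref{e-4-27-a} and obtains the two diagonal logarithms by reciprocating the fractions already computed in \eqref{e-36-ln} and \eqref{e-37-ln} for the dissipative case, which is exactly your second variant and is equivalent to your observation that $W_{\Theta_a}(z)=W_{\Theta_d}(z)^{-1}$ forces $\calS_a=-\calS_d$.
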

\begin{proof}
Our plan is to find $\calS_a$ by the definition using \eqref{e-80-entropy-def}. Keeping in mind \eqref{e4-28-d} we reciprocate the fraction in \eqref{e-36-ln} to get
\begin{equation}\label{e-42-ln}
\ln\left|\frac{\lambda_0+i}{\bar\lambda_0+i}\right|
=\frac{1}{2}\ln{\frac{|\lambda_0|^2+2\IM\lambda_0+1}{|\lambda_0|^2-2\IM\lambda_0+1}}.
\end{equation}
Similarly, reciprocating the corresponding fraction in \eqref{e-37-ln} yields
\begin{equation}\label{e-48-ln}
    \ln\left|\frac{\bar\lambda_0-i}{\lambda_0-i}\right|
    =\frac{1}{2}\ln{\frac{|\lambda_0|^2+2\IM\lambda_0+1}{|\lambda_0|^2-2\IM\lambda_0+1}}.
\end{equation}

Consequently,
\begin{equation}\label{e-41-a}
    \begin{aligned}
\calS_a&=-\tr\ln (|W_{\Theta_a}(-i)|)=-\tr\ln \left[
          \begin{array}{cc}
            \left|\frac{\lambda_0+i}{\bar\lambda_0+i}\right| & 0 \\
            0 & \left|\frac{\bar\lambda_0-i}{{\lambda_0}-i}\right| \\
          \end{array}
        \right]\\
&=-\tr\left[
          \begin{array}{cc}
            \frac{1}{2}\ln{\frac{|\lambda_0|^2+2\IM\lambda_0+1}{|\lambda_0|^2-2\IM\lambda_0+1}} & 0 \\
            0 & \frac{1}{2}\ln{\frac{|\lambda_0|^2+2\IM\lambda_0+1}{|\lambda_0|^2-2\IM\lambda_0+1}} \\
          \end{array}
        \right]\\
&=-\ln{\frac{|\lambda_0|^2+2\IM\lambda_0+1}{|\lambda_0|^2-2\IM\lambda_0+1}}
=\ln{\frac{|\lambda_0|^2-2\IM\lambda_0+1}{|\lambda_0|^2+2\IM\lambda_0+1}}.
    \end{aligned}
\end{equation}
Hence, we have shown \eqref{e-46-entropy}.
\end{proof}
Note that since $\IM\lambda_0>0$ the c-Entropy in \eqref{e-46-entropy} is clearly negative.

%%%%%%%%%%%
\begin{figure}
  % Requires \usepackage{graphicx}
  \begin{center}
\includegraphics[width=80mm]{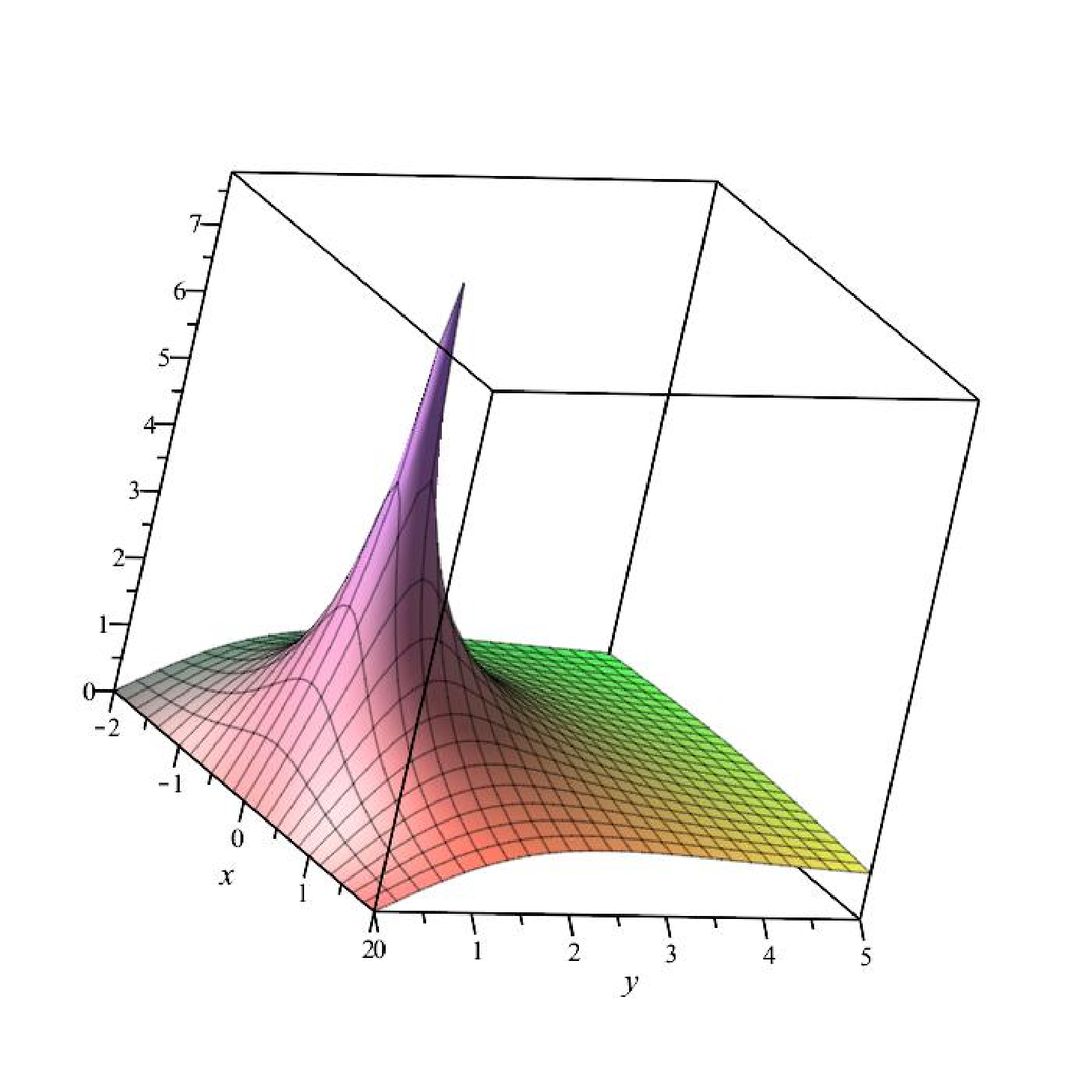}
  \caption{Maximal c-Entropy of a dissipative L-system}\label{fig-1}
  \end{center}
\end{figure}
%%%%%%%%%%%

We are going to analyze the L-systems $\Theta_d$ and $\Theta_a$ closer to see what values of $\lambda_0$ provide us with extreme values of c-Entropy.

\begin{theorem}\label{t-5}%
Let $\Theta_d$ be an L-system   of the form \eqref{e4-27-d} that is based upon  operator $T_d$ in \eqref{e4-22-d}. Then  the c-Entropy $\calS_d=+\infty$ if and only if $\lambda_0=i$. Moreover, the c-Entropy $\calS_d$ attains its  maximum (finite) value whenever  $\lambda_0=ai$, for $a\in(0,1)\cup(1,+\infty)$.
\end{theorem}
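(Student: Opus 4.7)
The plan is to work directly from the closed-form expression for $\calS_d$ supplied by Theorem~\ref{t-2}, namely
$$
\calS_d=\ln\frac{|\lambda_0|^2+2\IM\lambda_0+1}{|\lambda_0|^2-2\IM\lambda_0+1}.
$$
Writing $\lambda_0=x+iy$ with $y=\IM\lambda_0>0$, a short algebraic manipulation recasts the argument of the logarithm as $(x^{2}+(y+1)^{2})/(x^{2}+(y-1)^{2})$, which separates the two claims cleanly.

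For the $+\infty$ assertion, the numerator is strictly positive on the upper half-plane, while the denominator vanishes if and only if $x=0$ and $y=1$. This pins down $\lambda_0=i$ as the unique point where $\calS_d$ blows up, establishing the equivalence.

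To identify the maxima on the punctured upper half-plane, I would fix the modulus $r=|\lambda_0|>0$ and view $\calS_d$ as a function of $y\in(0,r]$ alone, namely $\calS_d=\ln\frac{r^{2}+2y+1}{r^{2}-2y+1}$. A one-line differentiation gives
$$
\frac{d}{dy}\frac{r^{2}+2y+1}{r^{2}-2y+1}=\frac{4(r^{2}+1)}{(r^{2}-2y+1)^{2}}>0,
$$
so the ratio is strictly increasing in $y$ wherever the denominator is nonzero. Hence, on the upper semicircle of radius $r$, the maximum is attained at the top, $y=r$, which forces $x=0$ and $\lambda_0=ri$. Setting $r=a$ and simplifying yields the finite maximum $2\ln\bigl|(a+1)/(a-1)\bigr|$, valid precisely for $a\in(0,1)\cup(1,+\infty)$.

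The main (very mild) obstacle is checking that the denominator $r^{2}-2y+1$ is strictly positive on the whole interval $(0,r]$ when $r\ne 1$, so that the derivative computation above is legitimate there. This reduces to the inequality $(r^{2}+1)/2>r$ for $r\ne 1$, which is immediate from AM--GM (with equality exactly at $r=1$, recovering the earlier singular case $\lambda_0=i$). All the rest is bookkeeping between the two regimes $r=1$ and $r\ne 1$.
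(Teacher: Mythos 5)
Your proof is correct, and the first half (the $\calS_d=+\infty$ criterion) is essentially identical to the paper's: both reduce the blow-up to the vanishing of $|\lambda_0|^2-2\IM\lambda_0+1=(\RE\lambda_0)^2+(1-\IM\lambda_0)^2$, which forces $\lambda_0=i$. Where you diverge is in the second half. The theorem's phrase ``attains its maximum (finite) value whenever $\lambda_0=ai$'' is a constrained statement, and the paper constrains along horizontal lines: it fixes $\IM\lambda_0=a\ne 1$ and maximizes $f(x)=\frac{x^2+(1+a)^2}{x^2+(1-a)^2}$ over $x=\RE\lambda_0$, which is immediate since $f(x)=1+\frac{4a}{x^2+(1-a)^2}$ decreases in $x^2$. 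You instead constrain along circles: fixing $r=|\lambda_0|$ and showing the ratio $\frac{r^2+2y+1}{r^2-2y+1}$ is strictly increasing in $y=\IM\lambda_0$ on $(0,r]$, with the positivity of the denominator guaranteed by $(r-1)^2>0$. Both readings land on the same conclusion $\lambda_0=ai$ and both computations are sound; yours has the small bonus of producing the explicit extreme value $2\ln\bigl|\tfrac{a+1}{a-1}\bigr|$ and of making the degenerate case $r=1$ visibly coincide with the singularity at $\lambda_0=i$, while the paper's one-variable reduction is marginally shorter and needs no positivity check. Either argument is acceptable; just be aware that you are proving a (harmlessly) different constrained-maximum statement than the one the authors had in mind.
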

\begin{proof}
Applying formula \eqref{e-40-entropy} for the c-Entropy $\calS_d$ and taking into account that $\IM\lambda_0>0$, we observe that the numerator of the expression
\begin{equation}\label{e-40-fr}
   \frac{|\lambda_0|^2+2\IM\lambda_0+1}{|\lambda_0|^2-2\IM\lambda_0+1}
\end{equation}
is always greater than $1$. Consequently, the natural logarithm expression on the right side of \eqref{e-40-entropy} is positive infinity if and only if the denominator in \eqref{e-40-fr} is zero, or
$$
|\lambda_0|^2-2\IM\lambda_0+1=(\RE\lambda_0)^2+(1-\IM\lambda_0)^2=0.
$$
The last equation is true whenever $\lambda_0=i$.

The second part of the statement is proved by considering the expression
$$
\frac{|\lambda_0|^2+2\IM\lambda_0+1}{|\lambda_0|^2-2\IM\lambda_0+1}=\frac{(\RE\lambda_0)^2+(1+\IM\lambda_0)^2}{(\RE\lambda_0)^2+(1-\IM\lambda_0)^2}
$$
as a function of a single real variable $x=\RE\lambda_0$ while keeping $\IM\lambda_0=a>0$, $a\ne1$ fixed. Then
$$
f(x)=\frac{x^2+(1+a)^2}{x^2+(1-a)^2}
$$
clearly attains its maximum at $x=0$.
\end{proof}

%%%%%%%%%%%
\begin{figure}
  % Requires \usepackage{graphicx}
  \begin{center}
  \includegraphics[width=80mm]{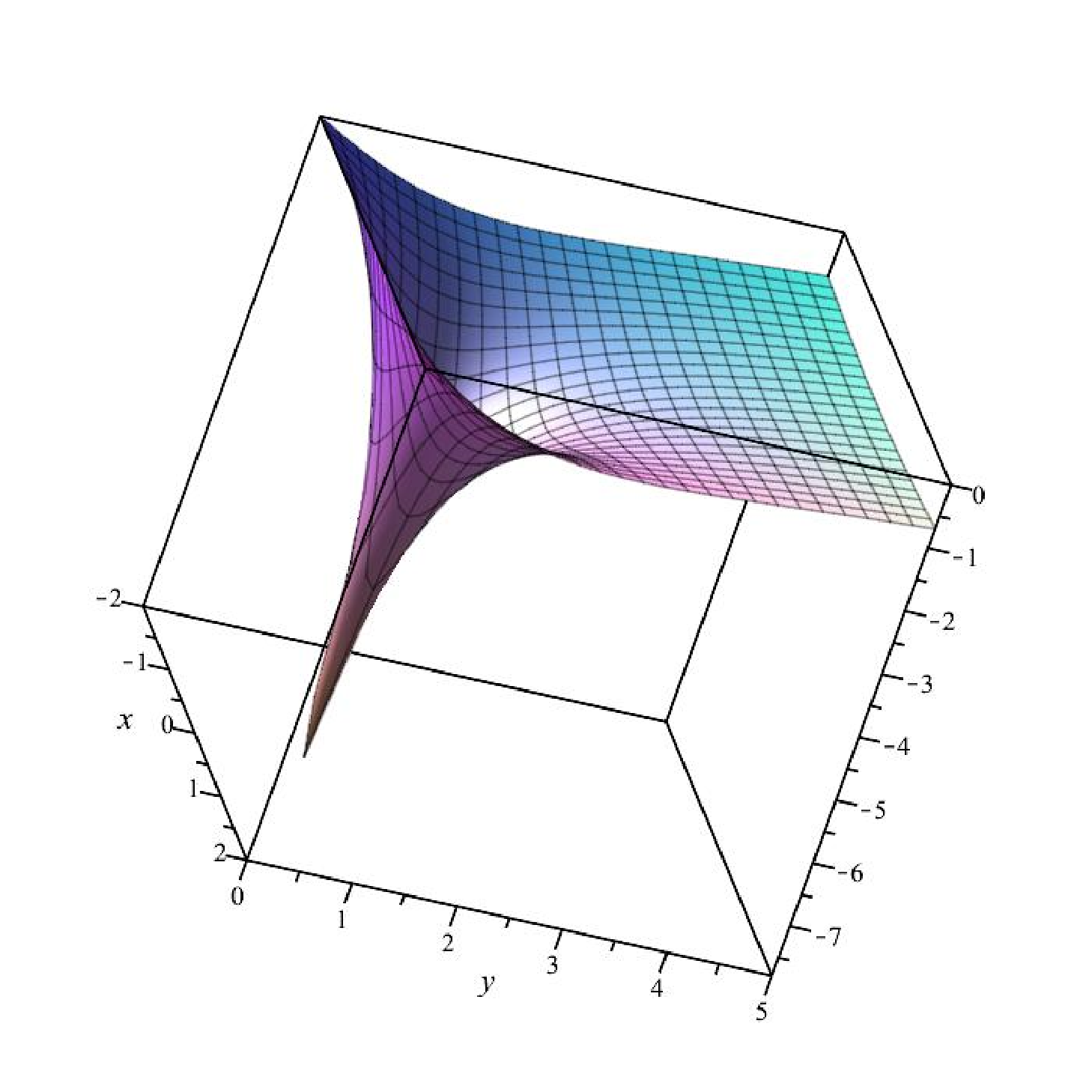}
  \caption{Minimal c-Entropy of an accumulative  L-system}\label{fig-2}
  \end{center}
\end{figure}
%%%%%%%%%%%

 The graph of c-Entropy {$\calS_d(x,y)$ } as a function of $x=\RE\lambda_0$ and $y=\RE\lambda_0$ is shown on Figure \ref{fig-1}. The visible pick is actually represents the infinite value of {}{$\calS_d(x,y)$ } that occurs at the point $(0,1)$ when $\lambda_0=i$.

A somewhat similar result takes place when we consider an accumulative L-system $\Theta_a$.
\begin{theorem}\label{t-6}%
Let $\Theta_a$ be an L-system   of the form \eqref{e-4-36-a} that is based upon  operator $T_a$ in \eqref{e-d-4-33}. Then  the c-Entropy $\calS_a=-\infty$ if and only if $\lambda_0=i$. Moreover, the c-entropy $\calS_d$ attains its  minimum (finite) value whenever  $\lambda_0=ai$, for $a\in(0,1)\cup(1,+\infty)$.
\end{theorem}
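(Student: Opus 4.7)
The plan is to proceed in close analogy with Theorem~\ref{t-5}, using the closed-form expression \eqref{e-46-entropy} for $\calS_a$ already established in Theorem~\ref{t-4}. The key observation is that the argument of the logarithm in \eqref{e-46-entropy} is the reciprocal of the one appearing in \eqref{e-40-entropy}, so the extremal behavior simply flips sign: divergence to $+\infty$ becomes divergence to $-\infty$, and points where the dissipative c-Entropy is maximized become points where the accumulative c-Entropy is minimized.

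First, I would establish the divergence statement. Since $\IM\lambda_0>0$, the denominator $|\lambda_0|^2+2\IM\lambda_0+1$ in \eqref{e-46-entropy} is strictly positive, so the only way to obtain $\calS_a=-\infty$ is for the argument of the logarithm to vanish, i.e.\ for the numerator $|\lambda_0|^2-2\IM\lambda_0+1$ to equal zero. Rewriting the numerator as $(\RE\lambda_0)^2+(\IM\lambda_0-1)^2$ shows that this forces $\RE\lambda_0=0$ and $\IM\lambda_0=1$, which is precisely $\lambda_0=i$.

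Next, for the finite extremal value, I would fix $\IM\lambda_0=a>0$ with $a\neq 1$ and view the relevant fraction as a function of the single real variable $x=\RE\lambda_0$,
$$
f(x)=\frac{x^2+(1-a)^2}{x^2+(1+a)^2}=1-\frac{4a}{x^2+(1+a)^2}.
$$
The second expression makes it immediate that $f$ is strictly decreasing on $(-\infty,0]$ and strictly increasing on $[0,+\infty)$, so both $f$ and $\ln f$ attain their minimum at $x=0$; equivalently, $\calS_a$ is minimized precisely when $\lambda_0=ai$ with $a\in(0,1)\cup(1,+\infty)$ (the case $a=1$ being excluded as it corresponds to the divergent value).

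There is no genuine obstacle here; the result is the mirror image of Theorem~\ref{t-5}, and essentially no new computation is needed beyond recognizing the reciprocal relationship between formulas \eqref{e-40-entropy} and \eqref{e-46-entropy}. The only point requiring care is to verify that this reciprocal inversion correctly swaps suprema with infima and $+\infty$ with $-\infty$, which follows from the monotonicity of the natural logarithm together with the sign of the expression under the logarithm.
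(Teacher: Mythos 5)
Your argument is correct and follows essentially the same route as the paper: identify the vanishing of the numerator $(\RE\lambda_0)^2+(1-\IM\lambda_0)^2$ as the sole source of $\calS_a=-\infty$, then minimize $g(x)=\frac{x^2+(1-a)^2}{x^2+(1+a)^2}$ over $x=\RE\lambda_0$ at $x=0$. Your rewriting $g(x)=1-\frac{4a}{x^2+(1+a)^2}$ is a nice touch that makes the monotonicity immediate, but it is only a more explicit version of the step the paper leaves to the reader.
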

\begin{proof}
The proof relies on the formula \eqref{e-46-entropy} and  closely replicates the steps of the proof of Theorem \ref{t-5} but applied to the expression
\begin{equation}\label{e-41-fr}
   \frac{|\lambda_0|^2-2\IM\lambda_0+1}{|\lambda_0|^2+2\IM\lambda_0+1}
   =\frac{(\RE\lambda_0)^2+(1-\IM\lambda_0)^2}{(\RE\lambda_0)^2+(1+\IM\lambda_0)^2}.
\end{equation}
Here we observe that the natural logarithm expression on the right side of \eqref{e-46-entropy} is negative infinity if and only if the numerator in \eqref{e-41-fr} is zero, or (as we have shown in the proof of Theorem \ref{t-5})
$$
|\lambda_0|^2-2\IM\lambda_0+1=(\RE\lambda_0)^2+(1-\IM\lambda_0)^2=0.
$$
The last equation is true whenever $\lambda_0=i$.

The second part of the statement is proved by analyzing the function
$$
g(x)=\frac{x^2+(1-a)^2}{x^2+(1+a)^2}
$$
to show that it achieves its minimum at $x=0$.
\end{proof}
 The graph of c-Entropy {$\calS_a(x,y)$ } as a function of $x=\RE\lambda_0$ and $y=\RE\lambda_0$ is shown on Figure \ref{fig-2}. The visible drop actually represents the infinite value of {$\calS_a(x,y)$ } that occurs at the point $(0,1)$ when $\lambda_0=i$.

\section{Coefficients of dissipation and accumulation}\label{s6}

Let us recall the definition of the dissipation coefficient of a L-system with the main dissipative operator $T$.
\begin{definition}[{cf. \cite{BT-16}}, \cite{BT-21}]\label{d-10}
Let $\Theta$ be an L-system  of the form \eqref{BL} with the main  operator $T$ and c-Entropy $\calS$. Then the quantity
 \begin{equation}\label{e-69-ent-dis}
\calD=1-e^{-2\calS}
\end{equation}
 is called the {\textbf{coefficient of dissipation}} of the L-system $\Theta$ if $\calS\ge0$.

Similarly,  the quantity
   \begin{equation}\label{e-70-ent-acc}
\calA=1-e^{2\calS}
\end{equation}
is called the  {\textbf{coefficient of accumulation}} of the L-system $\Theta$ if $\calS<0$.
\end{definition}

We will evaluate the coefficient of dissipation (or accumulation) for all three cases described in Section \ref{s6}. We begin with the case of an L-system   of the form \eqref{e4-27-d} with the dissipative operator $T_d$.

\begin{theorem}\label{t-8}%
Let $\Theta_d$ be an L-system   of the form \eqref{e4-27-d} that is based upon  operator $T_d$ in \eqref{e4-22-d}. Then  the dissipation coefficient $\calD_d$ of $\Theta_d$ is
\begin{equation}\label{e-44-dc}
     \calD_d=\frac{8\IM\lambda_0(|\lambda_0|^2+1)}{(|\lambda_0|^2+2\IM\lambda_0+1)^2}.
\end{equation}
\end{theorem}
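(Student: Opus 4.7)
The proof is essentially a direct computation from the c-Entropy formula already established in Theorem \ref{t-2}, combined with Definition \ref{d-10}.

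The plan is as follows. Since $\IM\lambda_0>0$, the argument of the logarithm in \eqref{e-40-entropy} exceeds $1$ (as already observed in the proof of Theorem \ref{t-5}, the numerator $|\lambda_0|^2+2\IM\lambda_0+1$ strictly dominates the denominator $|\lambda_0|^2-2\IM\lambda_0+1>0$). Consequently $\calS_d\ge 0$, so the correct formula from Definition \ref{d-10} is the dissipative one \eqref{e-69-ent-dis}, namely $\calD_d=1-e^{-2\calS_d}$, rather than the accumulative one \eqref{e-70-ent-acc}.

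The main step is then to exponentiate \eqref{e-40-entropy}:
$$
e^{-2\calS_d}=\left(\frac{|\lambda_0|^2-2\IM\lambda_0+1}{|\lambda_0|^2+2\IM\lambda_0+1}\right)^{2},
$$
so that
$$
\calD_d=1-\left(\frac{|\lambda_0|^2-2\IM\lambda_0+1}{|\lambda_0|^2+2\IM\lambda_0+1}\right)^{2}.
$$
Setting the shorthand $a=|\lambda_0|^2+1$ and $b=2\IM\lambda_0$, this is the standard difference-of-squares identity
$$
1-\left(\frac{a-b}{a+b}\right)^{2}=\frac{(a+b)^{2}-(a-b)^{2}}{(a+b)^{2}}=\frac{4ab}{(a+b)^{2}}.
$$
Substituting back for $a$ and $b$ gives $\calD_d=\dfrac{8\,\IM\lambda_0(|\lambda_0|^2+1)}{(|\lambda_0|^2+2\IM\lambda_0+1)^2}$, which is precisely \eqref{e-44-dc}.

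There is no real obstacle in this argument; the only piece of care is the sign check that places us in the dissipative regime of Definition \ref{d-10}. The algebraic simplification is routine once the difference-of-squares factorization is recognized.
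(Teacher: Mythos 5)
Your proposal is correct and follows essentially the same route as the paper: exponentiate the c-Entropy formula \eqref{e-40-entropy}, apply \eqref{e-69-ent-dis}, and simplify the resulting difference of squares (the paper does the same algebra after writing $\lambda_0=a+bi$, whereas you abbreviate $|\lambda_0|^2+1$ and $2\IM\lambda_0$ directly). Your explicit check that $\calS_d\ge 0$, so that the dissipative branch of Definition \ref{d-10} applies, is a small point of care the paper leaves implicit.
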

\begin{proof}
In order to simplify calculations let us temporarily denote $\lambda_0=a+bi$. Using \eqref{e-69-ent-dis} with   \eqref{e-40-entropy} and performing straightforward calculations we get
$$
\begin{aligned}
\calD_d&=1-e^{-2\cS_d}=1-\left(\frac{|\lambda_0|^2-2\IM\lambda_0+1}{|\lambda_0|^2+2\IM\lambda_0+1}\right)^2
=1-\frac{\big(a^2+(1-b)^2\big)^2}{\big(a^2+(1+b)^2\big)^2}\\
&=\frac{\big(a^2+(1+b)^2\big)^2-\big(a^2+(1-b)^2\big)^2}{\big(a^2+(1+b)^2\big)^2}=\frac{8a^2b+8b+8b^3}{\big(a^2+(1+b)^2\big)^2}\\
&=\frac{8b(a^2+b^2+1)}{\big(a^2+(1+b)^2\big)^2}=\frac{8\IM\lambda_0(|\lambda_0|^2+1)}{(|\lambda_0|^2+2\IM\lambda_0+1)^2}.
\end{aligned}
$$
Thus, \eqref{e-44-dc} takes place.
\end{proof}

Nest we will treat the case when the main operator of our L-system is neither dissipative nor accumulative.

\begin{theorem}\label{t-7}%
Let $\Theta_m$ be an L-system   of the form \eqref{e-4-34} that is based upon  operator $T_m$ in \eqref{e-d-4-30}. Then  the dissipation coefficient $\calD_m$ of $\Theta_m$ is zero, that is $\calD_m=0$.
\end{theorem}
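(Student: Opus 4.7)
The plan is to observe that the result follows immediately by combining Theorem \ref{t-9} with Definition \ref{d-10}. Since $\calS_m \ge 0$ (indeed $\calS_m = 0$), the dissipation coefficient formula \eqref{e-69-ent-dis} is the relevant one to apply, rather than the accumulation formula \eqref{e-70-ent-acc}.

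First I would invoke Theorem \ref{t-9}, which already establishes that $\calS_m = 0$. Then I would substitute this value into \eqref{e-69-ent-dis}:
\[
\calD_m = 1 - e^{-2\calS_m} = 1 - e^{0} = 1 - 1 = 0.
\]
This yields the claimed identity $\calD_m = 0$.

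There is essentially no obstacle here; the work has already been done in computing the c-Entropy in Theorem \ref{t-9}, where the two diagonal logarithmic contributions from $W_{\Theta_m}(-i)$ (namely $\ln\bigl|\tfrac{\bar\lambda_0+i}{\lambda_0+i}\bigr|$ and $\ln\bigl|\tfrac{\lambda_0+i}{\bar\lambda_0+i}\bigr|$) cancel exactly because they are negatives of one another. It may be worth remarking, for conceptual clarity, that this cancellation reflects the fact that $T_m$ is neither dissipative nor accumulative: the two diagonal entries of $\IM T_m$ have opposite signs (see the computation preceding \eqref{e-d-4-30}), so the ``dissipative'' and ``accumulative'' channels balance each other, leaving zero net dissipation.
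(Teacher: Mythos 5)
Your proposal is correct and follows exactly the paper's own argument: cite Theorem \ref{t-9} for $\calS_m=0$ and substitute into \eqref{e-69-ent-dis} to get $\calD_m=1-e^{0}=0$. The additional remark about the cancellation of the two logarithmic terms and the sign structure of $\IM T_m$ is accurate but not needed for the proof itself.
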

\begin{proof}
As we have shown in Theorem \ref{t-9}, for an L-system   of the form \eqref{e-4-34} that is based upon  operator $T_m$ in \eqref{e-d-4-30} the value of c-Entropy is zero. Hence, \eqref{e-69-ent-dis} yields
$$
\calD_m=1-e^{-2\cdot 0}=1-1=0.
$$
\end{proof}

Finally we are going to look at the case when the main operator of our L-system is  accumulative.
\begin{theorem}\label{t-10}%
Let $\Theta_a$ be an L-system   of the form \eqref{e-4-36-a} that is based upon  operator $T_a$ in \eqref{e-d-4-33}. Then  the accumulation coefficient $\calA_a$ of $\Theta_a$ is
\begin{equation}\label{e-52-dc-a}
     \calA_a=\frac{8\IM\lambda_0(|\lambda_0|^2+1)}{(|\lambda_0|^2+2\IM\lambda_0+1)^2}.
\end{equation}
\end{theorem}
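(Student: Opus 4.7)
The plan is to invoke the accumulation branch of Definition \ref{d-10}, since Theorem \ref{t-4} gives
$$
\calS_a=\ln\frac{|\lambda_0|^2-2\IM\lambda_0+1}{|\lambda_0|^2+2\IM\lambda_0+1},
$$
and the assumption $\IM\lambda_0>0$ makes the fraction strictly less than $1$, so $\calS_a<0$. Thus formula \eqref{e-70-ent-acc} is the applicable one, and the first step is to compute
$$
e^{2\calS_a}=\left(\frac{|\lambda_0|^2-2\IM\lambda_0+1}{|\lambda_0|^2+2\IM\lambda_0+1}\right)^{\!2}.
$$

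Next, subtracting this from $1$ leads to the very same algebraic manipulation already carried out in the proof of Theorem \ref{t-8}. Writing $\lambda_0=a+bi$, I would simplify the difference of squares $\bigl(a^2+(1+b)^2\bigr)^2-\bigl(a^2+(1-b)^2\bigr)^2=8b(a^2+b^2+1)$ in the numerator, while the denominator is $\bigl(a^2+(1+b)^2\bigr)^2=(|\lambda_0|^2+2\IM\lambda_0+1)^2$. Rewriting in invariant form yields \eqref{e-52-dc-a}.

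It is worth remarking that the identity $\calA_a=\calD_d$ observed by comparing \eqref{e-44-dc} and \eqref{e-52-dc-a} is no accident: the c-entropies of $\Theta_d$ and $\Theta_a$ are negatives of one another (Theorems \ref{t-2} and \ref{t-4}), so definitions \eqref{e-69-ent-dis} and \eqref{e-70-ent-acc} produce identical expressions under the symmetric switch. There is essentially no obstacle — the only nontrivial computational ingredient is the difference-of-squares expansion, which has already been performed in Theorem \ref{t-8}; the argument is effectively a corollary of that earlier calculation modulo the sign flip in Definition \ref{d-10}.
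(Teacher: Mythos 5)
Your proposal is correct and follows essentially the same route as the paper: the authors likewise apply the accumulation branch \eqref{e-70-ent-acc} to the value of $\calS_a$ from Theorem \ref{t-4}, set $\lambda_0=a+bi$, and reuse the difference-of-squares computation from Theorem \ref{t-8} to arrive at \eqref{e-52-dc-a}. Your added remarks — checking $\calS_a<0$ so the accumulation branch applies, and explaining why $\calA_a=\calD_d$ — are sound and consistent with the paper's observation following Theorem \ref{t-4}.
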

\begin{proof}
As in the proof of Theorem \ref{t-8} we let temporarily  $\lambda_0=a+bi$. Using \eqref{e-70-ent-acc} with   \eqref{e-46-entropy} and performing straightforward calculations we obtain
$$
\begin{aligned}
\calA_a&=1-e^{2\cS_a}=1-\left(\frac{|\lambda_0|^2-2\IM\lambda_0+1}{|\lambda_0|^2+2\IM\lambda_0+1}\right)^2
=1-\frac{\big(a^2+(1-b)^2\big)^2}{\big(a^2+(1+b)^2\big)^2}\\
&=\frac{8b(a^2+b^2+1)}{\big(a^2+(1+b)^2\big)^2}=\frac{8\IM\lambda_0(|\lambda_0|^2+1)}{(|\lambda_0|^2+2\IM\lambda_0+1)^2}.
\end{aligned}
$$
Thus, \eqref{e-52-dc-a} takes place.
\end{proof}

\section{L-system coupling and c-Entropy}\label{s7}

In this section, following \cite{Bro} (see also \cite{ABT}, \cite{BMkT-2} and \cite{BT-23}), we  introduce the coupling of two L-systems based upon the multiplication operator discussed in Sections \ref{s3} and \ref{s4}.

Let $\calH^2$ be a two-dimensional Hilbert space with an inner product $(\cdot,\cdot)$ and an orthogonal normalized basis of vectors $h_{01},h_{02}\in \calH^2$, ($\|h_{01}\|=\|h_{02}\|=1$). For fixed numbers $\lambda_1,\,\lambda_2,\,\mu_1,\mu_2\in\dC$ and such that $\IM\lambda_1>0$ and $\IM\lambda_2>0$, we introduce linear operators $T_1$ and $T_2$  in $\calH^2$ of the form
\begin{equation}\label{e-45-12}
    T_1 h=\left[
          \begin{array}{cc}
            \lambda_1 & 0 \\
            0 & \mu_1 \\
          \end{array}
        \right]
     h,\quad  T_2 h=\left[
          \begin{array}{cc}
            \lambda_2 & 0 \\
            0 & \mu_2 \\
          \end{array}
        \right]
     h,\quad h=\left[
                               \begin{array}{c}
                                 h_1 \\
                                 h_2 \\
                               \end{array}
                             \right]   \in\calH^2.
\end{equation}
Clearly, for $j=1,2$
$$
T^*_{j}=\left[
          \begin{array}{cc}
            \bar\lambda_j & 0 \\
            0 & \bar\mu_j \\
          \end{array}
        \right],\;\IM T_j =\left[
          \begin{array}{cc}
            \IM\lambda_j & 0 \\
            0 & \IM\mu_j \\
          \end{array}
        \right],\; \RE T_j =\left[
          \begin{array}{cc}
            \RE\lambda_j & 0 \\
            0 & \RE\mu_j \\
          \end{array}
        \right].
$$
Let $K_j:\dC^2\rightarrow\calH^2$ (for $j=1,2$) be such that
\begin{equation}\label{e4-23-lm}
   K_j \left[
                               \begin{array}{c}
                                 c_1 \\
                                 c_2 \\
                               \end{array}
                             \right]=\left[
          \begin{array}{cc}
            (\sqrt{\IM\lambda_j})h_{01} & 0 \\
            0 & (\sqrt{\IM\mu_j})h_{02} \\
          \end{array}
        \right]\left[
                               \begin{array}{c}
                                 c_1 \\
                                 c_2 \\
                               \end{array}
                             \right]=\left[
                               \begin{array}{c}
                               (\sqrt{\IM\lambda_j})c_1   h_{01} \\
                                (\sqrt{\IM\mu_j}) c_2  h_{02}\\
                               \end{array}
                             \right],%\in\calH^2,
\end{equation}
for all $c_1,\,c_2\in\dC$. Then, the adjoint operators $K^*_j:\calH^2\rightarrow\dC^2$ (for $j=1,2$) act on an arbitrary vector
$h=\left[
                               \begin{array}{c}
                                 h_1 \\
                                 h_2 \\
                               \end{array}
                             \right] =\left[
                               \begin{array}{c}
                               c_1   h_{01} \\
                                 c_2  h_{02}\\
                               \end{array}
                             \right]\in\calH^2$ as follows
\begin{equation}\label{e-4-32-lm}
   K^*_j h=K^*_j\left[
                               \begin{array}{c}
                               c_1   h_{01} \\
                                 c_2  h_{02}\\
                               \end{array}
                             \right]=\left[
                               \begin{array}{c}
                               (\sqrt{\IM\lambda_j})(h,c_1   h_{01}) \\
                                 (\sqrt{\IM\mu_j})(h, c_2  h_{02})\\
                               \end{array}
                             \right]=\left[
                               \begin{array}{c}
                                (\sqrt{\IM\lambda_j}) c_1 \\
                                  (\sqrt{\IM\mu_j})c_2 \\
                               \end{array}
                             \right].
\end{equation}

Let also
\begin{equation}\label{e48-J}
    J=I=\left[
          \begin{array}{cc}
            1 & 0 \\
            0 & 1 \\
          \end{array}
        \right]
\end{equation}
be the identity operator. Furthermore,
\begin{equation}\label{e49-lm}
\begin{aligned}
K_jJK^*_j h&=\left[
          \begin{array}{cc}
            (\sqrt{\IM\lambda_j})h_{01} & 0 \\
            0 & (\sqrt{\IM\mu_j})h_{02} \\
          \end{array}
        \right]\left[
          \begin{array}{cc}
            1 & 0 \\
            0 & 1 \\
          \end{array}
        \right]\left[
                               \begin{array}{c}
                                (\sqrt{\IM\lambda_j}) c_1 \\
                                  (\sqrt{\IM\mu_j})c_2 \\
                               \end{array}
                             \right]\\
                             &=\left[
          \begin{array}{cc}
            \IM\lambda_j & 0 \\
            0 & \IM\mu_j \\
          \end{array}
        \right]\left[
                               \begin{array}{c}
                                 h_1 \\
                                 h_2 \\
                               \end{array}
                             \right] =\IM T_j h.
\end{aligned}
\end{equation}

Consider two L-systems  based on $T_1$ and $T_2$, respectively,
\begin{equation}\label{e-46-Theta12}
    \Theta_1= \begin{pmatrix} T_1&K_1&\ J\cr  \calH^2 & &\dC^2\cr \end{pmatrix}
\quad \textrm{and}\quad
\Theta_2= \begin{pmatrix} T_2&K_2&\ J\cr  \calH^2 & &\dC^2\cr \end{pmatrix}.
\end{equation}
Taking into account that
\begin{equation}\label{e-51-Rezs}
(T_j-zI)^{-1}=\left[
          \begin{array}{cc}
            \frac{1}{\lambda_j-zI} & 0 \\
            0 & \frac{1}{\mu_j-zI} \\
          \end{array}
        \right],\quad j=1,2,
\end{equation}
we proceed with calculations of the transfer function $W_{\Theta_j}(z)$. Performing steps similar to the ones that were used to obtain formula \eqref{e-4-35}, we get
\begin{equation}\label{e-4-51}
        W_{\Theta_j} (z)=I-2iK^\ast_j (T_j-zI)^{-1}K_jJ=\left[
          \begin{array}{cc}
            \frac{\bar\lambda_j-z}{\lambda_j-z} & 0 \\
            0 & \frac{\mu_j-z}{\overline{\mu_j}-z} \\
          \end{array}
        \right],\quad j=1,2.
\end{equation}

Define an operator $\mathbf{T}$ acting on the Hilbert space $\calH^2\oplus\calH^2$ as
\begin{equation}\label{e-48-bT}
  \begin{aligned}
   \mathbf{T}\left[
               \begin{array}{c}
                 \mathbf{h}_1 \\
                 \mathbf{h}_2 \\
               \end{array}
             \right]&=\left[
                       \begin{array}{cc}
                         T_1 & 2iK_1JK_2^* \\
                         0 & T_2 \\
                       \end{array}
                     \right]\left[ \begin{array}{c}
                 \mathbf{h}_1 \\
                 \mathbf{h}_2 \\
               \end{array}
             \right]=\left[
               \begin{array}{c}
                 T_1 \mathbf{h}_1+2i K_1K_2^*\mathbf{h}_2 \\
                 T_2 \mathbf{h}_2 \\
               \end{array}
             \right]\\
             &\\
             &=\left[
                 \begin{array}{cccc}
                   \lambda_1 & 0 & 2i\sqrt{\IM\lambda_1\IM\lambda_2} & 0 \\
                   0 & \mu_1 & 0 & 2i\sqrt{\IM\mu_1\IM\mu_2} \\
                   0 & 0 & \lambda_2 & 0 \\
                   0 & 0 & 0 & \mu_2 \\
                 \end{array}
               \right]\left[
                        \begin{array}{c}
                          h_{11} \\
                          h_{12} \\
                          h_{21} \\
                          h_{22} \\
                        \end{array}
                      \right].
      \end{aligned}
   \end{equation}
Here
$$\mathbf{h}_1=\left[ \begin{array}{c}
                 h_{11} \\
                 h_{12} \\
               \end{array}
               \right],\mathbf{h}_2=\left[ \begin{array}{c}
                 h_{21} \\
                 h_{22} \\
               \end{array}
               \right]\in\calH^2 .$$

Direct check reveals that
 \begin{equation}\label{e-50-bT-adj}
     \mathbf{T^*}\left[
               \begin{array}{c}
                 \mathbf{h}_1 \\
                 \mathbf{h}_2 \\
               \end{array}
             \right]=\left[
                 \begin{array}{cccc}
                   \bar\lambda_1 & 0 & 0 & 0 \\
                   0 & \bar\mu_1 & 0 & 0 \\
                   -2i\sqrt{\IM\lambda_1\IM\lambda_2} & 0 & \bar\lambda_2 & 0 \\
                   0 & -2i\sqrt{\IM\mu_1\IM\mu_2} & 0 & \bar\mu_2 \\
                 \end{array}
               \right]\left[
                        \begin{array}{c}
                          h_{11} \\
                          h_{12} \\
                          h_{21} \\
                          h_{22} \\
                        \end{array}
                      \right].
        \end{equation}
In addition to $\mathbf{T}$ we define an operator $\mathbf{K}:\dC^2\rightarrow\calH^2\oplus\calH^2$ as
\begin{equation}\label{e-50-bK}
   \mathbf{K\,c}= \mathbf{K}\left[
               \begin{array}{c}
                 c_1 \\
                 c_2 \\
               \end{array}
             \right]=\left[
               \begin{array}{c}
                 K_1 \mathbf{c} \\
                 K_2\mathbf{c}\\
               \end{array}
             \right]=\left[
               \begin{array}{c}
                 (\sqrt{\IM\lambda_1}\, c_1) h_{01} \\
                (\sqrt{\IM\mu_1}\, c_2) h_{02}\\
                (\sqrt{\IM\lambda_2}\, c_1) h_{01} \\
                (\sqrt{\IM\mu_2}\, c_2) h_{02}\\
               \end{array}
             \right],\;\mathbf{c}=\left[
               \begin{array}{c}
                 c_1 \\
                 c_2 \\
               \end{array}
             \right].
\end{equation}
In this case the adjoint operator $\mathbf{K}^*:\calH^2\oplus\calH^2\rightarrow\dC^2$ is defined for all $\mathbf{h}_1,\mathbf{h}_2\in\calH^2$ as follows
\begin{equation}\label{e-51-b-adj}
    \mathbf{K}^*\left[
               \begin{array}{c}
                 h_1 \\
                 h_2 \\
               \end{array}
             \right]=K_1^*\mathbf{h}_1+K_2^*\mathbf{h}_2
             =\left[
               \begin{array}{c}
                 (h_{11}, \sqrt{\IM\lambda_1}\,h_{01})+(h_{12}, \sqrt{\IM\mu_1}\,h_{02}) \\
                 (h_{21}, \sqrt{\IM\lambda_2}\,h_{01})+(h_{22}, \sqrt{\IM\mu_2}\,h_{02}) \\
               \end{array}
             \right].
\end{equation}
One can confirm that
\begin{equation}\label{e-53-ImbT}
     \begin{aligned}
    \IM \mathbf{T}\left[
               \begin{array}{c}
              \mathbf{h}_1     \\
               \mathbf{h}_2     \\
               \end{array}
             \right]&=
             \frac{1}{2i}\left(
                       \begin{array}{cc}
                         2i\IM T_1 & 2iK_1K_2^* \\
                         2iK_2K_1^* & 2i\IM T_2 \\
                       \end{array}
                     \right)
             \left[
               \begin{array}{c}
              \mathbf{h}_1     \\
              \mathbf{h}_2     \\
               \end{array}
             \right]\\
             &=\left[
               \begin{array}{c}
                K_1K_1^* \mathbf{h}_1+K_1K_2^* \mathbf{h}_2 \\
                K_2K_2^* \mathbf{h}_2+K_2K_1^* \mathbf{h}_1\\
               \end{array}
             \right]=\mathbf{K}\mathbf{K}^*\left[
               \begin{array}{c}
                 \mathbf{h}_1 \\
                 \mathbf{h}_2 \\
               \end{array}
             \right],
        \end{aligned}
\end{equation}
and hence $\IM \mathbf{T}=\mathbf{K}\mathbf{K}^*$.

%{}
{Summarizing, we arrive at the following definition.}
\begin{definition}\label{d-11}
%{}{
Given two systems of the form \eqref{e-46-Theta12}
$$
  \Theta_1= \begin{pmatrix} T_1&K_1&\ J\cr  \calH^2 & &\dC^2\cr \end{pmatrix}
\quad \textrm{and}\quad
\Theta_2= \begin{pmatrix} T_2&K_2&\ J\cr  \calH^2 & &\dC^2\cr \end{pmatrix},
$$
define the  \textbf{coupling of two L-systems} as
 \begin{equation*}
\Theta= \begin{pmatrix} \mathbf{T}&\mathbf{K}&\ J\cr \calH^2\oplus\calH^2& &\dC^2\cr
\end{pmatrix},
\end{equation*}
where the operators $\mathbf{T}$ and $\mathbf{K}$  are presented in \eqref{e-48-bT}--\eqref{e-51-b-adj}.

In writing,
$$
\Theta=\Theta_1\cdot\Theta_2.
$$
\end{definition}

The following theorem can be derived from a more general result \cite[Theorem 3.1]{Bro},
{however, for {the} convenience of the reader, we provide a simple  proof of it.}
 \begin{theorem}[cf. \cite{Bro}]\label{unitar}
 Let an L-system $\Theta$ be the coupling of two L-systems $\Theta_1$
and $\Theta_2$ of the form \eqref{e-46-Theta12} with the main operators $T_1$ and $T_2$ given by \eqref{e-45-12}. Then if $z\in\rho(T_1)\cap\rho(T_2)={\bbC\setminus\{\lambda_1,\lambda_2,\mu_1,\mu_2\}}$, we have
\begin{equation}\label{e-91-mult}
W_\Theta(z)=W_{\Theta_1}(z)\cdot W_{\Theta_2}(z)=\left[
          \begin{array}{cc}
            \frac{\bar\lambda_1-z}{\lambda_1-z}\cdot\frac{\bar\lambda_2-z}{\lambda_2-z} & 0 \\
            0 & \frac{\bar\mu_1-z}{{\mu_1}-z}\cdot\frac{\bar\mu_2-z}{{\mu_2}-z} \\
          \end{array}
        \right].
\end{equation}
\end{theorem}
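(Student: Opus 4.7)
My plan is to exploit the block upper-triangular structure of the coupled main operator $\mathbf{T}$ defined in \eqref{e-48-bT} to invert $\mathbf{T}-zI$ explicitly, then substitute into the transfer-function formula \eqref{e6-3-3} and algebraically recognize the result as a product.

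First, since $J=I$ in the present setup, I would write
$$
\mathbf{T}-zI=\begin{pmatrix} T_1-zI & 2iK_1K_2^* \\ 0 & T_2-zI \end{pmatrix}
$$
and apply the standard block-triangular inversion formula. For $z\in\rho(T_1)\cap\rho(T_2)=\bbC\setminus\{\lambda_1,\lambda_2,\mu_1,\mu_2\}$ both $R_j:=(T_j-zI)^{-1}$ exist, and
$$
(\mathbf{T}-zI)^{-1}=\begin{pmatrix} R_1 & -2iR_1K_1K_2^*R_2 \\ 0 & R_2 \end{pmatrix}.
$$
Second, I would feed this into $W_\Theta(z)=I-2i\mathbf{K}^*(\mathbf{T}-zI)^{-1}\mathbf{K}J$, using the column form of $\mathbf{K}$ from \eqref{e-50-bK} and the row form of $\mathbf{K}^*$ from \eqref{e-51-b-adj}. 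Collecting the three resulting terms yields the identity
$$
W_\Theta(z)=I-2iK_1^*R_1K_1J-2iK_2^*R_2K_2J-4\,K_1^*R_1K_1JK_2^*R_2K_2J.
$$

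Third, I would recognize the right-hand side as the expansion of
$$
\bigl(I-2iK_1^*R_1K_1J\bigr)\bigl(I-2iK_2^*R_2K_2J\bigr)=W_{\Theta_1}(z)\cdot W_{\Theta_2}(z),
$$
which is exactly the Multiplication Theorem. Multiplying the two diagonal matrices from \eqref{e-4-51} entrywise then produces the explicit matrix form on the right of \eqref{e-91-mult}, completing the proof.

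The main obstacle I anticipate is purely bookkeeping: keeping track of the $J$-placements together with the factor $2i$ in the off-diagonal block of $\mathbf{T}$ so that the Schur-complement cross-term in $(\mathbf{T}-zI)^{-1}$ contributes precisely the mixed term $-4\,K_1^*R_1K_1JK_2^*R_2K_2J$ needed to close the factorization. Once the three-term expression for $\mathbf{K}^*(\mathbf{T}-zI)^{-1}\mathbf{K}J$ is written down carefully, the recognition step and the resulting diagonal multiplication are immediate, and the domain condition $z\in\rho(T_1)\cap\rho(T_2)$ automatically coincides with $\rho(\mathbf{T})$ by the triangular resolvent formula.
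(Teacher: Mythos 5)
Your proposal is correct, and it proves the statement by a genuinely different route than the paper. The paper never forms the three-term expansion of $\mathbf{K}^*(\mathbf{T}-zI)^{-1}\mathbf{K}$ in operator form; instead it writes $(\mathbf{T}-zI)^{-1}$ as an explicit $4\times 4$ diagonal-plus-corner matrix, uses the scalar identities $2i\sqrt{\IM\lambda_1\IM\lambda_2}=\sqrt{(\lambda_1-\bar\lambda_1)(\lambda_2-\bar\lambda_2)}$ and
\begin{equation*}
1-\frac{(\lambda_1-\bar\lambda_1)(\bar\lambda_2-z)+(\lambda_1-z)(\lambda_2-\bar\lambda_2)}{(\lambda_1-z)(\lambda_2-z)}
=\frac{(\bar\lambda_1-z)(\bar\lambda_2-z)}{(\lambda_1-z)(\lambda_2-z)},
\end{equation*}
and reads off each diagonal entry of $W_\Theta(z)$ directly, only afterwards recognizing the result as the product $W_{\Theta_1}(z)W_{\Theta_2}(z)$. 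You instead keep everything at the level of the operators $K_j$, $R_j$, $J$: the block-triangular resolvent gives
$\mathbf{K}^*(\mathbf{T}-zI)^{-1}\mathbf{K}=K_1^*R_1K_1-2iK_1^*R_1K_1JK_2^*R_2K_2+K_2^*R_2K_2$, and after multiplying by $-2i$ and $J$ the three terms assemble exactly into $\bigl(I-2iK_1^*R_1K_1J\bigr)\bigl(I-2iK_2^*R_2K_2J\bigr)$; your sign bookkeeping ($(-2i)^2=-4$ matching the cross term) checks out. What your approach buys is generality and transparency: it is precisely the abstract Multiplication Theorem of \cite{Bro} and works for any coupling whose off-diagonal block is $2iK_1JK_2^*$, with no use of the diagonal model. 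What the paper's approach buys is that the explicit right-hand side of \eqref{e-91-mult} falls out of the computation itself, whereas you must append the (immediate) entrywise multiplication of the two diagonal matrices from \eqref{e-4-51} to obtain the displayed matrix. Both arguments are complete and valid on $\bbC\setminus\{\lambda_1,\lambda_2,\mu_1,\mu_2\}$.
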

\begin{proof}
We start with a simple observation that for $\mathbf{T}$ defined by \eqref{e-48-bT} we have
\begin{equation}\label{e-58-T-z}
    \mathbf{T}-zI=\left[
                       \begin{array}{cc}
                         T_1-zI & 2iK_1JK_2^* \\
                         0 & T_2-zI \\
                       \end{array}
                     \right]
\end{equation}
Then a direct check would confirm that
\begin{equation}\label{e-59-ResT}
    (\mathbf{T}-zI)^{-1}=\left[
                       \begin{array}{cc}
                         (T_1-zI)^{-1} & -2i(T_1-zI)^{-1}K_1JK_2^*(T_2-zI)^{-1} \\
                         0 & (T_2-zI)^{-1} \\
                       \end{array}
                     \right].
\end{equation}
Applying formulas \eqref{e-51-Rezs}, \eqref{e4-23-lm}--\eqref{e48-J} together with \eqref{e-59-ResT} one gets
$$
(\mathbf{T}-zI)^{-1}=\left[
                 \begin{array}{cccc}
                   \frac{1}{\lambda_1-zI} & 0 & \frac{-2i\sqrt{\IM\lambda_1\IM\lambda_2}}{(\lambda_1-zI)(\lambda_2-zI)} & 0 \\
                   0 & \frac{1}{\mu_1-zI} & 0 & \frac{-2i\sqrt{\IM\mu_1\IM\mu_2}}{(\mu_1-zI)(\mu_2-zI)} \\
                   0 & 0 & \frac{1}{\lambda_2-zI} & 0 \\
                   0 & 0 & 0 & \frac{1}{\mu_2-zI} \\
                 \end{array}
               \right].
$$
Taking in to account that
$$
2i\sqrt{\IM\lambda_1\cdot\IM\lambda_2}=\sqrt{(\lambda_1-\bar\lambda_1)(\lambda_2-\bar\lambda_2)},
$$
 and also
$$
  \begin{aligned}
&\frac{2i\left((\lambda_2-z)\IM\lambda_1-2i\IM\lambda_1\IM\lambda_2+(\lambda_1-z)\IM \lambda_1 \right)}{(\lambda_1-z)(\lambda_2-z)}\\
&=\frac{(\lambda_1-\bar\lambda_1)(\bar\lambda_2-z)+(\lambda_1-z)(\lambda_2-\bar\lambda_2)}{(\lambda_1-z)(\lambda_2-z)},
\end{aligned}
$$
(the same is true for $\mu_1$ and $\mu_2$) we   proceed to find
$$
 2i\mathbf{K}^*(\mathbf{T}-zI)^{-1}\mathbf{K}=\left[
          \begin{array}{cc}
            \frac{(\lambda_1-\bar\lambda_1)(\bar\lambda_2-z)+(\lambda_1-z)(\lambda_2-\bar\lambda_2)}{(\lambda_1-z)(\lambda_2-z)} & 0 \\
            0 & \frac{(\mu_1-\bar\mu_1)(\bar\mu_2-z)+(\mu_1-z)(\mu_2-\bar\mu_2)}{(\mu_1-z)(\mu_2-z)} \\
          \end{array}
        \right].
 $$
 Next we observe that
 $$
 \begin{aligned}
 1-&\frac{(\lambda_1-\bar\lambda_1)(\bar\lambda_2-z)+(\lambda_1-z)(\lambda_2-\bar\lambda_2)}{(\lambda_1-z)(\lambda_2-z)}\\
 &=\frac{(\lambda_1-z)(\lambda_2-z)-(\lambda_1-\bar\lambda_1)(\bar\lambda_2-z)-(\lambda_1-z)(\lambda_2-\bar\lambda_2)}{(\lambda_1-z)(\lambda_2-z)} \\
 &=\frac{(\bar\lambda_1-z)(\bar\lambda_2-z)}{(\lambda_1-z)(\lambda_2-z)}.
  \end{aligned}
 $$
 Finally,
  $$
    \begin{aligned}
W_\Theta(z)&=I-2i\mathbf{K}^*(\mathbf{T}-zI)^{-1}\mathbf{K}\\
&=I-\left[
          \begin{array}{cc}
            \frac{(\lambda_1-\bar\lambda_1)(\bar\lambda_2-z)+(\lambda_1-z)(\lambda_2-\bar\lambda_2)}{(\lambda_1-z)(\lambda_2-z)} & 0 \\
            0 & \frac{(\mu_1-\bar\mu_1)(\bar\mu_2-z)+(\mu_1-z)(\mu_2-\bar\mu_2)}{(\mu_1-z)(\mu_2-z)} \\
          \end{array}
        \right]\\
&=\left[
          \begin{array}{cc}
            \frac{(\bar\lambda_1-z)(\bar\lambda_2-z)}{(\lambda_1-z)(\lambda_2-z)} & 0 \\
            0 & \frac{(\bar\mu_1-z)(\bar\mu_2-z)}{(\mu_1-z)(\mu_2-z)} \\
          \end{array}
        \right]=W_{\Theta_1}(z)\cdot W_{\Theta_2}(z),
 \end{aligned}
 $$
 completing the proof.
\end{proof}

Now, we turn our attention to the c-Entropy of L-system coupling. The following theorem establishes the additivity property of c-Entropy with respect to the coupling of two L-systems, thereby justifying the use of the term ``coupling entropy."

\begin{theorem}\label{t-13}
Let an L-system $\Theta$ be the coupling of two L-systems $\Theta_1$ and $\Theta_2$ of the form \eqref{e-46-Theta12} with the corresponding c-entropies $\calS_1$ and $\calS_2$.
%main operators $T_1$ and $T_2$ given by \eqref{e-45-T12}.
Then the c-Entropy $\calS$ of $\Theta$ is such that
\begin{equation}\label{e-56-ent}
\calS=\calS_1+\calS_2.
\end{equation}

If either $\calS_1=\infty$ or $\calS_2=\infty$, then $\calS=\infty$.
\end{theorem}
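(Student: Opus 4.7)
The plan is to invoke the multiplication theorem just established (Theorem \ref{unitar}) and then exploit the fact that $W_{\Theta_1}(z)$ and $W_{\Theta_2}(z)$ are simultaneously diagonal in the chosen basis. First, by formula \eqref{e-4-51}, each $W_{\Theta_j}(-i)$ is a diagonal $2\times 2$ matrix; since the spectrum of $T_j$ lies in the closed upper half-plane by the assumptions $\IM\lambda_j\ge 0$, $\IM\mu_j\ge 0$ (together with the stronger $\IM\lambda_j>0$), the point $z=-i$ belongs to $\rho(T_1)\cap\rho(T_2)$ and hence to $\rho(\mathbf{T})$. Theorem \ref{unitar} therefore applies at $z=-i$, yielding
$$
W_\Theta(-i)=W_{\Theta_1}(-i)\cdot W_{\Theta_2}(-i).
$$

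Second, since the two factors on the right are diagonal in the same basis, they commute and their product is diagonal with entrywise product of diagonal entries. Consequently,
$$
|W_\Theta(-i)|=|W_{\Theta_1}(-i)|\cdot|W_{\Theta_2}(-i)|,
$$
and, the right-hand side being a product of commuting positive diagonal matrices, the matrix logarithm is additive:
$$
\ln|W_\Theta(-i)|=\ln|W_{\Theta_1}(-i)|+\ln|W_{\Theta_2}(-i)|.
$$
Taking the negative of the trace on both sides and using its linearity, together with Definition \ref{d-9-ent}, immediately gives \eqref{e-56-ent}.

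Finally, for the infinite case, observe that $\calS_j=\infty$ forces at least one diagonal entry of $|W_{\Theta_j}(-i)|$ to vanish (by the scalar identity $\ln 0=-\infty$). Since the corresponding diagonal entry of the other factor is a finite Blaschke-type ratio of the form $\frac{\bar\lambda-(-i)}{\lambda-(-i)}$ or $\frac{\bar\mu+i}{\mu+i}$ whose denominator does not vanish under our spectral assumptions, the corresponding entry of the product $W_\Theta(-i)$ still vanishes, driving $\tr\ln|W_\Theta(-i)|$ to $-\infty$ and therefore $\calS$ to $+\infty$.

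There is no genuine obstacle: the entire argument reduces to the scalar identity $\ln|ab|=\ln|a|+\ln|b|$, lifted to a pair of simultaneously diagonalized matrices via the trace. The only subtlety worth checking is the spectral one, namely that $-i\in\rho(\mathbf{T})$, which follows from the block-triangular form of $\mathbf{T}$ in \eqref{e-48-bT} whose spectrum is $\{\lambda_1,\mu_1,\lambda_2,\mu_2\}$.
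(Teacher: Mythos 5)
Your proof is correct and follows essentially the same route as the paper's: both invoke Theorem~\ref{unitar} at $z=-i$ and then use the additivity of $-\tr\ln\abs{\,\cdot\,}$ over the product of the two diagonal matrices $W_{\Theta_1}(-i)$ and $W_{\Theta_2}(-i)$. The only difference is that you also verify $-i\in\rho(\mathbf{T})$ and spell out the infinite case, details the paper leaves implicit.
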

\begin{proof}
We rely on the Definition \ref{d-9-ent} of c-Entropy and Theorem \ref{unitar}. Applying \eqref{e-80-entropy-def} with \eqref{e-91-mult} yields
$$
\begin{aligned}
\calS&=-\tr\ln (|W_\Theta(-i)|)=-\tr \ln (|W_{\Theta_1}(-i)\cdot W_{\Theta_2}(-i)|)\\
&=-\tr\ln (|W_{\Theta_1}(-i)|-\tr\ln (|W_{\Theta_2}(-i)|=\calS_1+\calS_2.
\end{aligned}
$$
\end{proof}
Now we are going to look into the dissipation and accumulation coefficients of the coupling of two L-systems of the form \eqref{e-46-Theta12}. In \cite{BT-21} we made a note that  if L-system $\Theta$ with c-Entropy $\calS$ is a coupling of two L-systems $\Theta_1$ and $\Theta_2$ with c-entropies $\calS_1$ and $\calS_2$, respectively, formula \eqref{e-56-ent} holds, i.e., $\calS=\calS_1+\calS_2.$
Let $\calD$, $\calD_1$, and $\calD_2$ be the dissipation coefficients of L-systems $\Theta$, $\Theta_1$, and $\Theta_2$. Then \eqref{e-69-ent-dis} implies
$$
1-\calD=e^{-2\cS}=e^{-2(\cS_1+\cS_2)}=e^{-2\cS_1}\cdot e^{-2\cS_2}=(1-\calD_1)(1-\calD_2).
$$
Thus, the formula
\begin{equation}\label{e-72-coupling}
    \calD=1-(1-\calD_1)(1-\calD_2)=\calD_1+\calD_2-\calD_1\calD_2
\end{equation}
describes the coefficient of dissipation of the L-system coupling. Similarly,
$$
1-\calA=e^{2\cS}=e^{2(\cS_1+\cS_2)}=e^{2\cS_1}\cdot e^{2\cS_2}=(1-\calA_1)(1-\calA_2).
$$
Therefore,
\begin{equation}\label{e-73-coupling}
    \calA=1-(1-\calA_1)(1-\calA_2)=\calA_1+\calA_2-\calA_1\calA_2
\end{equation}
is the coefficient of accumulation of the L-system coupling.

\section{Examples}

In this section we present  examples that illustrate the construction of L-system of the forms \eqref{e4-27-d}, \eqref{e-4-34}, and \eqref{e-4-36-a} for {various} values of $\lambda_0$.

\subsection*{Example 1}\label{ex-1}

Let $\lambda_0=i$ and consider the linear operators $T_d$, $T_m$, and $T_a$ of the forms \eqref{e4-22-d}, \eqref{e-d-4-30}, and \eqref{e-d-4-33}, respectively, acting on  a two-dimensional Hilbert space $\calH^2$ with an inner product $(\cdot,\cdot)$ and an orthogonal normalized basis of vectors $h_{01},h_{02}\in \calH^2$, ($\|h_{01}\|=\|h_{02}\|=1$). We have then
\begin{equation}\label{e-ex-63}
    T_d h=\left[
          \begin{array}{cc}
            i & 0 \\
            0 & i \\
          \end{array}
        \right]
     h,\quad T_m h=\left[
          \begin{array}{cc}
            i & 0 \\
            0 & -i \\
          \end{array}
        \right]
     h,\quad T_a h=\left[
          \begin{array}{cc}
            -i & 0 \\
            0 & -i \\
          \end{array}
        \right],
\end{equation}
where $h=\left[
                               \begin{array}{c}
                                 h_1 \\
                                 h_2 \\
                               \end{array}
                             \right]   \in\calH^2$.

We are going to include $T_d$, $T_m$, and $T_a$  into  L-systems $\Theta_d$, $\Theta_m$, and $\Theta_a$ of the forms \eqref{e4-27-d}, \eqref{e-4-34}, and \eqref{e-4-36-a}, respectively. In order to do that we take an operator $K:\dC^2\rightarrow\calH^2$ of the form \eqref{e4-23-d} that in our case of $\IM i=1$ becomes
\begin{equation}\label{e-ex1-73}
    K \left[
                               \begin{array}{c}
                                 c_1 \\
                                 c_2 \\
                               \end{array}
                             \right]=\left[
          \begin{array}{cc}
            h_{01} & 0 \\
            0 & h_{02} \\
          \end{array}
        \right]\left[
                               \begin{array}{c}
                                 c_1 \\
                                 c_2 \\
                               \end{array}
                             \right]=\left[
                               \begin{array}{c}
                               c_1   h_{01} \\
                                 c_2  h_{02}\\
                               \end{array}
                             \right]
\end{equation}
for all $c_1,\,c_2\in\dC$. Then, the adjoint operator $K^*:\calH^2\rightarrow\dC^2$ is given by \eqref{e-4-32} and is
\begin{equation}\label{e-ex1-74}
    K^* h=K^*\left[
                               \begin{array}{c}
                               c_1   h_{01} \\
                                 c_2  h_{02}\\
                               \end{array}
                             \right]=\left[
                               \begin{array}{c}
                               (h,c_1   h_{01}) \\
                                (h, c_2  h_{02})\\
                               \end{array}
                             \right]=\left[
                               \begin{array}{c}
                                 c_1 \\
                                 c_2 \\
                               \end{array}
                             \right].
\end{equation}
%Consequently, $K\,K^* h=h.$
Following \eqref{e25-I}, \eqref{e-25-J}, and \eqref{e-34-J} we set
\begin{equation}\label{e6-J}
    J_d=I=\left[
          \begin{array}{cc}
            1 & 0 \\
            0 & 1 \\
          \end{array}
        \right], \quad J_m=\left[
          \begin{array}{cc}
            1 & 0 \\
            0 & -1 \\
          \end{array}
        \right], \quad J_a=-I=\left[
          \begin{array}{cc}
            -1 & 0 \\
            0 & -1 \\
          \end{array}
        \right].
\end{equation}

We are constructing three L-systems $\Theta_d$, $\Theta_m$, and $\Theta_a$ of the forms \eqref{e4-27-d}, \eqref{e-4-34}, and \eqref{e-4-36-a}
\begin{equation}\label{e-ex1-75}
    \Theta_d= \begin{pmatrix} T_d&K&\ J_d\cr  \calH^2 & &\dC^2\cr \end{pmatrix},\;
    \Theta_m= \begin{pmatrix} T_m&K&\ J_m\cr  \calH^2 & &\dC^2\cr \end{pmatrix},\;
    \Theta_a= \begin{pmatrix} T_m&K&\ J_a\cr  \calH^2 & &\dC^2\cr \end{pmatrix},
\end{equation}
where operators  $T_d$, $T_m$,  $T_a$, $J_d$, $J_m$, $J_a$,  and $K$ are defined by \eqref{e-ex-63}--\eqref{e6-J}. Note that all three L-systems above share the same channel operator $K$ of the form \eqref{e-ex1-73}.

Using \eqref{e4-28-d}, \eqref{e-4-35}, and \eqref{e-4-27-a}  we have
\begin{equation}\label{e-ex1-76}
    \begin{aligned}
   W_{\Theta_d} (z)&=I-2iK^\ast (T_d-zI)^{-1}KI=\left[
          \begin{array}{cc}
            \frac{-i-z}{i-z} & 0 \\
            0 & \frac{i+z}{{-i+z}} \\
          \end{array}
        \right]=\frac{z+i}{{z-i}}I,\\
     W_{\Theta_m} (z)&=I-2iK^\ast (T_m-zI)^{-1}KJ_m=\left[
          \begin{array}{cc}
            \frac{z+i}{z-i} & 0 \\
            0 & \frac{z-i}{z+i} \\
          \end{array}
        \right],\\
     W_{\Theta_a} (z)&=I-2iK^\ast (T_a-zI)^{-1}K J_a=\left[
          \begin{array}{cc}
            \frac{z-i}{z+i} & 0 \\
            0 & \frac{z-i}{z+i} \\
          \end{array}
        \right]=\frac{z-i}{{z+i}}I.
        \end{aligned}
\end{equation}
The corresponding impedance function is easily found using \eqref{e4-29-d}, \eqref{e-4-36}, and \eqref{e-4-36-aa}
\begin{equation}\label{e-ex1-77}
    \begin{aligned}
 V_{\Theta_d} (z)&=K^\ast (\RE T_d - zI)^{-1} K=\left[
          \begin{array}{cc}
            -\frac{1}{z} & 0 \\
            0 & -\frac{1}{z} \\
          \end{array}
        \right]=-\frac{1}{z}I,\\
    V_{\Theta_m} (z)&=K^\ast (\RE T_m - zI)^{-1} K=\left[
          \begin{array}{cc}
            -\frac{1}{z} & 0 \\
            0 & -\frac{1}{z} \\
          \end{array}
        \right]=-\frac{1}{z}I,\\
        V_{\Theta_a} (z)&K^\ast (\RE T_a - zI)^{-1} K=\left[
          \begin{array}{cc}
            -\frac{1}{z}  & 0 \\
            0 & -\frac{1}{z}  \\
          \end{array}
        \right]=-\frac{1}{z}I.
         \end{aligned}
\end{equation}
As one can see $V_{\Theta_d} (z)=V_{\Theta_m} (z)=V_{\Theta_a} (z)$ for all $z\in\dC_\pm\setminus\{0\}$. Also, the c-entropies $\calS_d$, $\calS_m$, and $\calS_a$  of all three L-systems $\Theta_d$, $\Theta_m$, and $\Theta_a$  are found via \eqref{e-40-entropy}, \eqref{e-41-m}, and \eqref{e-46-entropy}, respectively,  and are
\begin{equation}\label{e-ex1-70}
    \calS_d=+\infty ,\quad \calS_m=0 ,\quad \calS_a=-\infty.
\end{equation}
The corresponding dissipation/accumulation coefficients are (see \eqref{e-69-ent-dis}-\eqref{e-70-ent-acc})
\begin{equation}\label{e-ex1-71}
    \calD_d=1,\quad \calD_m=0 ,\quad \calA_a=1.
\end{equation}

\subsection*{Example 2}\label{ex-2}

Following the steps of Example 1 we let $\lambda_0=1+i$ and consider the linear operators $T_d$, $T_m$, and $T_a$ of the forms \eqref{e4-22-d}, \eqref{e-d-4-30}, and \eqref{e-d-4-33}. Then
\begin{equation}\label{e-ex2-74}
    T_d h=\left[
          \begin{array}{cc}
            1+i & 0 \\
            0 & 1+i \\
          \end{array}
        \right]
     h,\; T_m h=\left[
          \begin{array}{cc}
            1+i & 0 \\
            0 & 1-i \\
          \end{array}
        \right]
     h,\; T_a h=\left[
          \begin{array}{cc}
            1-i & 0 \\
            0 & 1-i \\
          \end{array}
        \right],
\end{equation}
where $h=\left[
                               \begin{array}{c}
                                 h_1 \\
                                 h_2 \\
                               \end{array}
                             \right]   \in\calH^2$.

We are constructing three L-systems $\Theta_d$, $\Theta_m$, and $\Theta_a$ of the forms \eqref{e4-27-d}, \eqref{e-4-34}, and \eqref{e-4-36-a}
\begin{equation}\label{e-ex2-75}
    \Theta_d= \begin{pmatrix} T_d&K&\ J_d\cr  \calH^2 & &\dC^2\cr \end{pmatrix},\;
    \Theta_m= \begin{pmatrix} T_m&K&\ J_m\cr  \calH^2 & &\dC^2\cr \end{pmatrix},\;
    \Theta_a= \begin{pmatrix} T_m&K&\ J_a\cr  \calH^2 & &\dC^2\cr \end{pmatrix},
\end{equation}
where operators  $T_d$, $T_m$,  $T_a$, $J_d$, $J_m$, $J_a$,  and $K$ are defined by \eqref{e-ex2-74}, \eqref{e-ex1-73}--\eqref{e6-J}.

Using \eqref{e4-28-d}, \eqref{e-4-35}, and \eqref{e-4-27-a}  we have
\begin{equation}\label{e-ex2-76}
    \begin{aligned}
           W_{\Theta_d} (z)&=I-2iK^\ast (T_d-zI)^{-1}KI=\left[
          \begin{array}{cc}
            \frac{1-i-z}{1+i-z} & 0 \\
            0 & \frac{1+i+z}{1-i+z} \\
          \end{array}
        \right],\\
     W_{\Theta_m} (z)&=I-2iK^\ast (T_m-zI)^{-1}KJ_m=\left[
          \begin{array}{cc}
            \frac{1-i-z}{1+i-z} & 0 \\
            0 & \frac{1+i-z}{1-i-z} \\
          \end{array}
        \right],\\
     W_{\Theta_a} (z)&=I-2iK^\ast (T_a-zI)^{-1}K J_a=\left[
          \begin{array}{cc}
            \frac{1+i-z}{1-i-z} & 0 \\
            0 & \frac{1-i+z}{1+i+z} \\
          \end{array}
        \right].
        \end{aligned}
\end{equation}
The corresponding impedance function is easily found using \eqref{e4-29-d}, \eqref{e-4-36}, and \eqref{e-4-36-aa}
\begin{equation}\label{e-ex2-77}
    \begin{aligned}
 V_{\Theta_d} (z)&=K^\ast (\RE T_d - zI)^{-1} K=\left[
          \begin{array}{cc}
            \frac{1}{1-z} & 0 \\
            0 & -\frac{1}{1+z} \\
          \end{array}
        \right],\\
    V_{\Theta_m} (z)&=K^\ast (\RE T_m - zI)^{-1} K=\left[
          \begin{array}{cc}
            \frac{1}{1-z} & 0 \\
            0 & \frac{1}{1-z} \\
          \end{array}
        \right]=\frac{1}{1-z}I,\\
        V_{\Theta_a} (z)&K^\ast (\RE T_a - zI)^{-1} K=\left[
          \begin{array}{cc}
            \frac{1}{1-z}  & 0 \\
            0 & -\frac{1}{1+z}  \\
          \end{array}
        \right].
         \end{aligned}
\end{equation}
The c-entropies $\calS_d$, $\calS_m$, and $\calS_a$  of all three L-systems $\Theta_d$, $\Theta_m$, and $\Theta_a$  are found via \eqref{e-40-entropy}, \eqref{e-41-m}, and \eqref{e-46-entropy}, respectively,  and are
\begin{equation}\label{e-ex2-78}
    \calS_d=\ln5 ,\quad \calS_m=0 ,\quad \calS_a=-\ln5.
\end{equation}
The corresponding dissipation/accumulation coefficients are (see \eqref{e-69-ent-dis}-\eqref{e-70-ent-acc})
\begin{equation}\label{e-ex2-79}
    \calD_d=1-e^{-2\ln5}=\frac{24}{25},\quad \calD_m=0 ,\quad \calA_a=1-e^{-2\ln5}=\frac{24}{25}.
\end{equation}

%%%%%%%%%%%%%%%%%

\end{document}